\newtheorem{thm}{Theorem}
\newtheorem{lem}[thm]{Lemma}
\newtheorem{cor}[thm]{Corollary}
\theoremstyle{definition}
\newtheorem{rem}[thm]{Remark}
\newcommand{\Z}{\mathbb{Z}}
\newcommand{\nexteq}{\displaybreak[0]\\ &=}
\newcommand{\allone}{\mathbf{1}}
\DeclareMathOperator{\wt}{wt}
\DeclareMathOperator{\diag}{diag}
\DeclareMathOperator{\Lee}{Lee}
\DeclareMathOperator{\Norm}{Norm}
\newcommand{\NormI}{\Norm_{\text{I}}}
\newcommand{\NormII}{\Norm_{\text{II}}}
\newcommand{\Norme}{\Norm_{\text{e}}}
\newcommand{\Normo}{\Norm_{\text{o}}}
\begin{document}

\title{The Codes and the Lattices
of Hadamard Matrices}
\author[A. Munemasa]{Akihiro Munemasa}
\address{Graduate School of Information Sciences,
Tohoku University, Sendai, 980-8579 Japan}
\email{munemasa@math.is.tohoku.ac.jp}

\author[H. Tamura]{Hiroki Tamura}
\address{Graduate School of Information Sciences,
Tohoku University, Sendai, 980-8579 Japan}
\email{tamura@ims.is.tohoku.ac.jp}

\subjclass[2000]{Primary 05B20; Secondary 94C30, 11H71}

\keywords{Hadamard matrix,
self-dual code, Leech lattice, even unimodular lattice}

\date{October 22, 2011}

\begin{abstract}
It has been observed by Assmus and Key
as a result of the
complete classification of Hadamard matrices of order $24$,
that the extremality of the binary
code of a Hadamard matrix $H$ of order $24$
is equivalent to the extremality of the
ternary code of $H^T$.
In this note, we 
present two proofs of this fact, neither of which
depends on the classification. One
is a consequence of a more general
result on the minimum weight of the dual of the code
of a Hadamard matrix. The other relates the lattices
obtained from the binary code and from the ternary 
code.
Both proofs are presented in greater generality to include
higher orders. In particular, 
the latter method is also used to show the equivalence
of (i) the extremality of the ternary code, (ii) the 
extremality of the $\Z_4$-code, and (iii) the extremality of
a lattice obtained from a Hadamard matrix of order $48$.
\end{abstract}

\maketitle

\section{Introduction}
A Hadamard matrix is a square matrix $H$ of order $n$ 
with entries $\pm1$ satisfying $HH^T=nI$, where $I$ denotes
the identity matrix.
If $m$ is an odd
integer such that $n\equiv0\pmod{m}$ and $(m,\frac{n}{m})
=1$, then the row vectors of a Hadamard matrix
of order $n$ generate a
self-dual code of length $n$ over $\Z/m\Z$, called the code
of $H$ over $\Z/m\Z$.
In particular, the ternary code of a
Hadamard matrix of order $24$ is a 
self-dual code of length $24$. A ternary self-dual 
code of length $24$ 
is called extremal if its minimum weight is
$9$. Such codes have been classified in \cite{LPS}, 
and there are
exactly two extremal ternary self-dual codes of length
$24$, up to equivalence. It is known that,
from the classification of Hadamard matrices of order $24$
(see \cite{ILu,IL,K}),
there are exactly two Hadamard matrices, up to equivalence,
whose codes are extremal ternary self-dual codes. One is
the Paley matrix, and the other is the matrix $H58$
(cf \cite{AK}).

For a Hadamard matrix $H$, the matrix 
$B=\frac12(H+J)$, where $J$ denotes the all-one matrix, is called the
binary Hadamard matrix associated to $H$. 
A Hadamard matrix $H$ is said to be normalized if all
the entries of its first row are $1$. 
For a normalized Hadamard matrix $H$,
the binary code generated by the row vectors of
the binary Hadamard matrix associated to $H$ is called the binary
code of $H$. It is not difficult to check that if
$H,H'$ are Hadamard
equivalent normalized Hadamard matrices, then
the binary codes of $H,H'$ are equivalent. The binary
code of a Hadamard matrix of order $n$ is doubly even self-dual
if $n\equiv8\pmod{16}$ (see \cite[Section 17.3]{Hall}). 
More generally, the code over $\Z/2m\Z$ generated by
the row vectors of $B$ is type II self-dual if
$n\equiv0\pmod{8m}$ and $(2m,\frac{n}{8m})=1$.
In particular, the binary code of
every normalized Hadamard matrix of order $24$ is a binary
doubly even self-dual code of length $24$. 
A binary doubly even self-dual code of length $24$
is called extremal if its minimum weight is $8$.
The extended binary Golay 
code is the unique extremal
binary doubly even self-dual code length $24$.
It is known that,
from the classification of Hadamard matrices of order $24$,
there are exactly two normalized 
Hadamard matrices, up to equivalence,
whose binary codes are equivalent to the extended binary
Golay code. One is
the Paley matrix, and the other is the matrix $H8$
(cf \cite{AK}).

Among the sixty equivalence classes of Hadamard matrices
of order $24$, only two correspond to extremal ternary
self-dual codes, and also only two correspond to extremal
binary doubly even self-dual codes. Somewhat remarkable
fact \cite[p.~286]{AK} was that, apart from the Paley matrix
which is common to the ternary and the binary cases,
the transpose of the Hadamard
matrix $H58$ is Hadamard equivalent to the matrix $H8$. 
Since the Paley matrix is Hadamard 
equivalent to its transpose,
this phenomenon makes one wonder if there is any reason
why the extremality of the ternary code of a Hadamard
matrix is equivalent to the extremality of the binary
code of its transpose.
The purpose of this paper is to give a
theoretical explanation of this phenomenon, which does
not depend on the classification of Hadamard matrices
of order $24$. Two different proofs will be given of this
fact. In Section~3, we give an elementary and direct
method to analyze the existence of a codeword of small
Euclidean norm 
 in the dual of the code of a Hadamard matrix. This
method can be adapted to deal with the binary case, and
the proof is a simple consequence (Corollary~\ref{cor:1}).
In Section~4, we will consider the unimodular lattices
obtained from the $\Z_m$-code and the $\Z_{n/4m}$-code
of a (binary) Hadamard matrix of order $n$.
It is shown in particular, that the lattice obtained
from the ternary code of a Hadamard matrix $H$ of order $24$
is isometric to a neighbor $L$ of the lattice $L_2$
obtained from the binary code of $H$. Then the extremality
of the ternary code or that of the binary code is shown
to be equivalent to the common neighbor $\Lambda$ of $L$ and $L_2$
being the Leech lattice.
We also show that the extremality of the ternary code of
a Hadamard matrix of order $48$ is equivalent to the
extremality (in the sense of Euclidean norm) of the
$\Z_4$-code of its binary transpose,
and to the extremality of the even unimodular lattice
obtained as above $\Lambda$. 
We note that a weaker equivalence for order $48$ will be
proved in Section~3 without using lattices.

\section{Elementary divisors of Hadamard matrices}

We denote the all-one matrix by $J$, and the all-one vector
by $\allone$. We also denote by $e_i$
the vector with a $1$ in the $i$-th coordinate and $0$ elsewhere.
We refer the reader to
\cite{MS} for unexplained terminology in codes.

\begin{lem}\label{lem:gcd}
If positive integers $x,y,z,w$ satisfy $xy=wz$ and $(x,y)=1$,
then $x=(x,z)(x,w)$.
\end{lem}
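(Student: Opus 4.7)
The plan is to verify the equality prime by prime via $p$-adic valuations. First I would fix a prime $p$ and set $e=v_p(x)$, $\alpha=v_p(w)$, $\beta=v_p(z)$. The case $p\nmid x$ is trivial, since then $e=0$ and both $v_p((x,w))=\min(0,\alpha)$ and $v_p((x,z))=\min(0,\beta)$ vanish, so both sides of the asserted equality have valuation $0$ at $p$.

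The substantive case is $p\mid x$. Here I would use the hypothesis $(x,y)=1$ to deduce $v_p(y)=0$, so that applying $v_p$ to $xy=wz$ gives $e=\alpha+\beta$. Together with the nonnegativity of $\alpha,\beta$, this forces $\alpha\le e$ and $\beta\le e$, and hence
\[
v_p((x,w))=\min(e,\alpha)=\alpha \quad\text{and}\quad v_p((x,z))=\min(e,\beta)=\beta.
\]
Adding these gives $v_p((x,w)(x,z))=\alpha+\beta=e=v_p(x)$. Since equality of valuations holds at every prime, $x=(x,w)(x,z)$ follows.

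The key step—and the only place where the hypotheses $xy=wz$ and $(x,y)=1$ are essentially used—is the identity $e=\alpha+\beta$, which is precisely what makes the two minima collapse to $\alpha$ and $\beta$ respectively. I do not expect any step to present a real obstacle; the lemma is an elementary fact about valuations, and once the relation $e=\alpha+\beta$ has been extracted, the rest is routine bookkeeping. A purely divisibility-theoretic write-up is of course possible, but the $p$-adic formulation keeps the argument transparent and symmetric in $w$ and $z$.
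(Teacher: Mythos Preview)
Your proof is correct. The paper states this lemma without proof, treating it as an elementary fact, so there is nothing to compare against; your $p$-adic valuation argument is a clean and complete justification.
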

The following lemma follows immediately from \cite[Chap. II, Exercise 4]{Newman}.
See also \cite[Part 4, Theorem 10.7]{WSW}.
\begin{lem}\label{lem:divH}
Let $H$ be a Hadamard matrix of order $n$, and let
$d_1|d_2|\cdots|d_n$ be the elementary divisors of $H$.
Then we have $d_id_{n+1-i}=n$ for all $i$.
\end{lem}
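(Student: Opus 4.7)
The plan is to compute the Smith normal form of $H^T$ in two different ways and compare them using the uniqueness of elementary divisors.

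First I would fix a Smith decomposition $H=UDV$ with $U,V\in GL_n(\Z)$ and $D=\diag(d_1,\ldots,d_n)$, and observe that transposing gives $H^T=V^T D U^T$, which is again a Smith decomposition since $V^T,U^T\in GL_n(\Z)$. Hence the elementary divisors of $H^T$ coincide with those $d_1,\ldots,d_n$ of $H$.

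Second, the defining relation $HH^T=nI$ yields $H^T=nH^{-1}=V^{-1}(nD^{-1})U^{-1}$. Since $H^T$ is an integer matrix, each $d_i$ must divide $n$, so $nD^{-1}=\diag(n/d_1,\ldots,n/d_n)$ is an integer diagonal matrix. The chain $d_1\mid d_2\mid\cdots\mid d_n$ reverses to $n/d_n\mid n/d_{n-1}\mid\cdots\mid n/d_1$. Pre- and post-multiplying $nD^{-1}$ by the permutation matrix that reverses the coordinate order (which is unimodular) and absorbing this permutation into the outer factors $V^{-1}$ and $U^{-1}$, I obtain a Smith decomposition of $H^T$ whose diagonal is $n/d_n,n/d_{n-1},\ldots,n/d_1$.

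Comparing the two decompositions via the uniqueness of the Smith normal form yields $d_i=n/d_{n+1-i}$, which is precisely the identity $d_i d_{n+1-i}=n$. I do not anticipate a serious obstacle: the argument is a direct manipulation of the Smith normal form, and the only step requiring a short justification is the reordering of the reciprocal diagonal, which is immediate from the unimodularity of permutation matrices.
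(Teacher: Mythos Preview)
Your proposal is correct and follows essentially the same route as the paper: both arguments pass from a Smith decomposition $PHQ=\diag(d_1,\dots,d_n)$ to $Q^{-1}H^TP^{-1}=\diag(n/d_1,\dots,n/d_n)$ via $H^T=nH^{-1}$, and then invoke uniqueness of elementary divisors. The only difference is cosmetic --- you spell out explicitly that $H$ and $H^T$ share the same Smith form (by transposing) and that the reversed diagonal is achieved by conjugating with a permutation matrix, whereas the paper leaves these standard facts implicit.
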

\begin{proof}
Take $P,Q\in GL(n,\Z)$ so that $PHQ=\diag(d_1,\dots,d_n)$.
Then we have $Q^{-1}H^TP^{-1}=\diag(\frac{n}{d_1},\dots,\frac{n}{d_n})$ and
$\frac{n}{d_n}|\cdots|\frac{n}{d_2}|\frac{n}{d_1}$ are also
the elementary divisors of $H$.
\end{proof}
\begin{lem}\label{lem:sd}
Let $H$ be a Hadamard matrix of order $n$, $m$ an integer
such that $m|n$ and $(m,\frac{n}{m})=1$.
Then the row vectors of $H$ generate
a self-dual code of length $n$ over $\Z/m\Z$.
\end{lem}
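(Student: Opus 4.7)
The plan is to verify self-duality by showing the two usual ingredients separately: that the code $C$ generated by the rows of $H$ satisfies $C\subseteq C^\perp$, and that $|C|=m^{n/2}$. Together with the identity $|C|\cdot|C^\perp|=m^n$, these force $C=C^\perp$.

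The containment $C\subseteq C^\perp$ is immediate. Since $HH^T=nI$ and $m\mid n$, reducing modulo $m$ gives $HH^T\equiv 0\pmod{m}$, so any two rows of $H$ are orthogonal mod $m$ and each row is self-orthogonal mod $m$. Hence the $\Z/m\Z$-span of the rows lies in its own dual.

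For the size, I would use the Smith normal form to count. Choose $P,Q\in GL(n,\Z)$ with $PHQ=\diag(d_1,\dots,d_n)$. The map $\Z^n\to(\Z/m\Z)^n$, $v\mapsto vH\bmod m$, has the same image (up to a $\Z/m\Z$-linear isomorphism induced by $Q^{-1}$) as $v\mapsto v\diag(d_1,\dots,d_n)\bmod m$, whose image has cardinality $\prod_i m/(m,d_i)$. Thus
\[
|C|=\frac{m^n}{\prod_{i=1}^n (m,d_i)},
\]
so the claim $|C|=m^{n/2}$ is equivalent to $\prod_{i=1}^n (m,d_i)=m^{n/2}$.

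This last equality is where Lemmas \ref{lem:gcd} and \ref{lem:divH} combine. For each $i$, Lemma \ref{lem:divH} gives $d_id_{n+1-i}=n$, so applying Lemma \ref{lem:gcd} with $x=m$, $y=n/m$, $w=d_i$, $z=d_{n+1-i}$ (the hypotheses $xy=wz$ and $(x,y)=1$ being exactly $m\cdot(n/m)=n$ and $(m,n/m)=1$) yields
\[
(m,d_i)\,(m,d_{n+1-i}) = m.
\]
Taking the product over $i=1,\dots,n$ and noting that reindexing $i\mapsto n+1-i$ leaves the product unchanged, one obtains $\bigl(\prod_i (m,d_i)\bigr)^2=m^n$, hence $\prod_i(m,d_i)=m^{n/2}$. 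The only mild obstacle is the bookkeeping in the Smith normal form step — making sure that right-multiplication by the unimodular matrix $Q^{-1}$ preserves the cardinality of the image mod $m$ — and otherwise the proof is a direct assembly of the two preceding lemmas.
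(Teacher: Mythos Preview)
Your proof is correct and follows essentially the same approach as the paper: self-orthogonality from $HH^T\equiv 0\pmod m$, then the cardinality count $|C|=\prod_i m/(m,d_i)$ via the elementary divisors, with the pairing $(m,d_i)(m,d_{n+1-i})=m$ coming from Lemmas~\ref{lem:gcd} and~\ref{lem:divH}. The only cosmetic difference is that the paper runs the product over $i=1,\dots,n/2$ directly, whereas you take the full product and invoke the reindexing symmetry; the content is identical.
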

\begin{proof}
Let $C$ be the code over $\Z/m\Z$ generated by the row vectors
of $H$. Clearly, $C$ is self-orthogonal.
Let $d_1|d_2|\cdots|d_n$ be the elementary divisors of $H$. Since
\begin{align*}
|C|&=\prod_{i=1}^n\frac{m}{(m,d_i)}\\
&=\prod_{i=1}^{n/2}\frac{m}{(m,d_i)}\cdot\frac{m}{(m,n/d_i)}&&\text{(by Lemma~\ref{lem:divH})}\\
&=\prod_{i=1}^{n/2}\frac{m^2}{m}&&\text{(by Lemma~\ref{lem:gcd})}\\
&=m^{n/2},
\end{align*}
$C$ is self-dual.
\end{proof}
%
%
\begin{lem}\label{lem:divB}
Let $H$ be a Hadamard matrix of order $n$, normalized
in such a way that the entries of its first row are all
$1$. Let $B$ be the binary Hadamard matrix associated to $H$.
If the elementary divisors of $H$ are $1=d_1|d_2|\cdots|d_n$,
then those of $B$ are $1|\frac{d_2}{2}|\cdots|\frac{d_n}{2}$.
\end{lem}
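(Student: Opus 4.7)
\medskip

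The plan is to reduce both $H$ and $B$ to block diagonal form via the same sequence of elementary row and column operations, and then exhibit the lower-right blocks as explicit scalar multiples of each other.

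First I would use the normalization: since the first row of $H$ is $\mathbf{1}^T$, subtracting column $1$ from each of columns $2,\dots,n$ replaces the first row by $(1,0,\dots,0)$ without disturbing the first column. Let $h_i = H_{i,1} \in \{\pm 1\}$. Then subtracting $h_i$ times the (new) first row from row $i$ for $i \ge 2$ produces the block form $\begin{pmatrix} 1 & \mathbf{0}^T \\ \mathbf{0} & H'' \end{pmatrix}$, where $H''$ is the $(n-1)\times(n-1)$ matrix with $H''_{i-1,j-1} = H_{ij} - h_i$ for $i,j \ge 2$. Since $H_{ij}, h_i \in \{\pm 1\}$, the entries of $H''$ lie in $\{-2,0,2\}$, so $H'' = 2 C$ for an integer matrix $C$ with entries in $\{-1,0,1\}$.

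Next I would carry out the analogous reduction on $B$. Since $B_{1j} = 1$ for all $j$ and $b_i := B_{i,1} = \tfrac12(h_i+1) \in \{0,1\}$, the same column-then-row operations reduce $B$ to $\begin{pmatrix} 1 & \mathbf{0}^T \\ \mathbf{0} & B'' \end{pmatrix}$ with $B''_{i-1,j-1} = B_{ij} - b_i$ for $i,j \ge 2$. The crux of the argument is the direct identity
\[
B''_{i-1,j-1} = \tfrac12(H_{ij}+1) - \tfrac12(h_i+1) = \tfrac12(H_{ij}-h_i) = C_{i-1,j-1},
\]
so $B'' = C$ and $H'' = 2B''$.

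Finally I would read off the elementary divisors. If $C$ has elementary divisors $e_1 \mid e_2 \mid \cdots \mid e_{n-1}$, then $H$ has elementary divisors $1, 2e_1, 2e_2, \dots, 2e_{n-1}$ and $B$ has elementary divisors $1, e_1, e_2, \dots, e_{n-1}$; matching against $d_1 \mid \cdots \mid d_n$ yields $e_{i-1} = d_i/2$ for $i \ge 2$, which is exactly the claim. There is no real obstacle here: the argument is essentially a one-line observation once both matrices are put in the same block form, and the only thing to verify is that the row and column operations used on $H$ and on $B$ are literally identical (which they are, because $B$ inherits its first row and the parity pattern of its first column from $H$).
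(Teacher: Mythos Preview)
Your proof is correct and follows essentially the same approach as the paper: both reduce $H$ and $B$ to block-triangular form with a $1$ in the upper-left corner and observe that the lower-right blocks differ exactly by a factor of~$2$. The only differences are that the paper first further normalizes the first column of $H$ to $(1,-1,\dots,-1)^T$ and uses row operations only, whereas you use column operations as well to avoid that extra step; your closing remark that the row operations on $H$ and on $B$ are ``literally identical'' is not quite accurate (you subtract $h_i$ versus $b_i$ times row~$1$), but this is immaterial to the argument.
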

\begin{proof}
We can assume that $H$ is normalized as 
$\begin{pmatrix}1&\begin{matrix}\dots&1\end{matrix}\\ \begin{matrix}-1\\ \vdots\\-1\end{matrix}&H'\end{pmatrix}$.
Then
\begin{align*}
\begin{pmatrix}1&0\dots 0\\\begin{matrix}1\\ \vdots\\ 1\end{matrix}&I_{n-1}\end{pmatrix}H&=
\begin{pmatrix}1&\begin{matrix}\dots&1\end{matrix}\\\begin{matrix}0\\ \vdots\\0\end{matrix}&H'+J\end{pmatrix}\\
\intertext{and}
B=\frac{1}{2}(H+J)&=
\begin{pmatrix}1&\begin{matrix}\dots&1\end{matrix}\\\begin{matrix}0\\ \vdots\\0\end{matrix}&\frac{1}{2}(H'+J)\end{pmatrix}.
\end{align*}
The result follows by comparing the above two equalities.
\end{proof}
%
Let $m$ be a positive integer, and set $V=\Z/m\Z$.
We regard an element $u\in V$ as an element of the set
of integers $\{0,1,\dots,m-1\}$,
and define the Lee weight and the Euclidean norm of 
an element $u\in V$ by
\begin{align*}
\Lee(u)&=\min\{u,m-u\},\\
\Norm(u)&=(\Lee(u))^2.
\end{align*}
For a vector $u=(u_1,\dots,u_n)\in V^n$, we set
\[
\Norm(u)=\sum_{i=1}^n\Norm(u_i).
\]
Alternatively, the Euclidean norm can be defined as
\[
\Norm(u)=\min\left\{\|v\|^2\mid v\in\Z^n,v\bmod m=u\right\}.
\]

Recall that a self-dual code over $\Z/2m\Z$ is type II if
the Euclidean norm of every codeword is divisible by $4m$.

\begin{lem}\label{lem:sd2}
Let $H$ be a normalized Hadamard matrix of order $n$,
$B$ the binary Hadamard matrix associated to $H$.
Let $\ell\ge2$ be an integer such that $4\ell|n$ and $(\ell,\frac{n}{4\ell})=1$.
Then the row vectors of $B$ generate
a self-dual code over $\Z/\ell\Z$ of length $n$, 
which is type II if $\ell$ is even.
\end{lem}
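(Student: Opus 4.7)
The plan is to follow the pattern of the proof of Lemma~\ref{lem:sd}: establish self-orthogonality from a Gram-matrix computation, count $|C|$ via the elementary divisors of $B$ using Lemma~\ref{lem:divB}, and then treat the type II property by a separate mod-$4m$ argument applied to integer lifts of codewords.

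First, I compute the pairwise inner products of the rows $b_1,\dots,b_n$ of $B=\tfrac12(H+J)$, viewed as integer vectors. Writing $b_i=\tfrac12(h_i+\allone)$ with $h_i$ the $i$-th row of $H$, and using $h_i\cdot h_j=n\delta_{ij}$ together with the normalization $h_1=\allone$ (so $h_i\cdot\allone=n\delta_{i1}$), one obtains $b_1\cdot b_1=n$, $b_i\cdot b_i=n/2$ for $i\ge 2$, $b_1\cdot b_j=n/2$ for $j\ge 2$, and $b_i\cdot b_j=n/4$ for distinct $i,j\ge 2$. All three values $n$, $n/2$, $n/4$ are divisible by $\ell$ since $4\ell\mid n$, so the code $C$ over $\Z/\ell\Z$ generated by the rows of $B$ is self-orthogonal.

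For self-duality I count $|C|=\prod_{i=1}^n\ell/(\ell,e_i)$, where $e_1\mid\cdots\mid e_n$ are the elementary divisors of $B$. By Lemma~\ref{lem:divB} together with Lemma~\ref{lem:divH} (using $d_1=1$ and $d_n=n$), one has $e_1=1$, $e_n=n/2$, and $e_ie_{n+1-i}=n/4$ for $2\le i\le n-1$. Since $\ell\mid n/2$, the extreme pair $\{1,n\}$ contributes $\ell/(\ell,1)\cdot\ell/(\ell,n/2)=\ell$. For each inner pair $\{i,n+1-i\}$ with $2\le i\le n/2$, Lemma~\ref{lem:gcd} applied with $x=\ell$, $y=n/(4\ell)$, $w=e_i$, $z=e_{n+1-i}$ (legitimate because $(\ell,n/(4\ell))=1$) gives $(\ell,e_i)(\ell,e_{n+1-i})=\ell$, so the pair contributes $\ell^2/\ell=\ell$. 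Multiplying all contributions yields $|C|=\ell^{n/2}$, and self-duality follows.

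For the type II property with $\ell=2m$, I would first observe that for any integer lift $\hat c\in\Z^n$ of $c$, $\Norm(c)\equiv\|\hat c\|^2\pmod{4m}$: replacing $\hat c$ by $\hat c+\ell v$ changes $\|\hat c\|^2$ by $2\ell\,\hat c\cdot v+\ell^2\|v\|^2$, and both terms are divisible by $4m=2\ell$ (the quadratic one because $\ell/2\in\Z$ now that $\ell$ is even). Taking $\hat c=\sum_i a_i b_i$ and expanding via polarization, the diagonal terms $a_i^2\|b_i\|^2$ vanish mod $4m$ because $\|b_i\|^2\in\{n,n/2\}$ is divisible by $4m$ (from $4\ell\mid n$), while the doubled off-diagonal contributions $2a_ia_j\,b_i\cdot b_j$ vanish mod $4m$ because $b_i\cdot b_j\in\{n/2,n/4\}$ is divisible by $2m=\ell$; hence $\Norm(c)\equiv 0\pmod{4m}$. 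The main obstacle, and the place where the evenness of $\ell$ is actually used, is precisely the reduction $\Norm(c)\equiv\|\hat c\|^2\pmod{4m}$: without $\ell/2\in\Z$ the term $\ell^2\|v\|^2$ need not vanish modulo $4m$ and the type II conclusion could genuinely fail. Everything else is a mechanical combination of the Hadamard identity with Lemmas~\ref{lem:divH}, \ref{lem:gcd}, and \ref{lem:divB}.
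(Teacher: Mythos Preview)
Your proof is correct and follows essentially the same approach as the paper: the self-orthogonality via the Gram matrix $BB^T$ and the self-duality count via Lemmas~\ref{lem:divB}, \ref{lem:divH}, \ref{lem:gcd} are identical to the paper's argument. The only difference is that for the type II property the paper simply notes that the diagonal entries of $BB^T$ are divisible by $2\ell$ and invokes \cite[Lemma~2.2]{BDHO}, whereas you unpack that lemma directly; your explicit $\bmod\;2\ell$ computation (including the observation that evenness of $\ell$ is exactly what makes $\ell^2\|v\|^2\equiv 0\pmod{2\ell}$) is a correct and self-contained substitute.
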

\begin{proof}
Let $C$ be the code over $\Z/\ell\Z$ generated by the row vectors of $B$.
Since $H$ is normalized, we have
\begin{align}
BB^T&=\frac{1}{4}(H+J)(H^T+J)\nonumber\\
&=\frac{n}{4}(I+e_1^T\allone+\allone^Te_1+J)\label{eq:orth}\\
&\equiv0\pmod{\ell}.\nonumber
\end{align}
Thus $C$ is self-orthogonal.
Let $d_1|d_2|\cdots|d_n$ be the elementary divisors of $H$.
Since 
\[
(\ell,d_i/2)(\ell,n/2d_i)=(\ell,d_i/2)(\ell,(n/4)/(d_i/2))=\ell
\]
by Lemma~\ref{lem:gcd}, we have
\begin{align*}
|C|&=\ell\prod_{i=2}^n\frac{\ell}{(\ell,d_i/2)}&&\text{(by Lemma~\ref{lem:divB})}\\
&=\ell\prod_{i=2}^{n/2}\frac{\ell}{(\ell,d_i/2)}\cdot\frac{\ell}{(\ell,n/2d_i)}
&&\text{(by Lemma~\ref{lem:divH})}\\
&=\ell\prod_{i=2}^{n/2}\frac{\ell^2}{\ell}\\
&=\ell^{n/2}.
\end{align*}
Thus $C$ is self-dual.
Finally, since $\ell|\frac{n}{4}$, (\ref{eq:orth}) implies that 
the diagonal entries of $BB^T$ are divisible by $2\ell$. Thus
$C$ is type II if $\ell$ is even by \cite[Lemma 2.2]{BDHO}.
\end{proof}

\section{Minimum weights of codes of Hadamard matrices}
We introduce two types of pair of norms of a vector over $V=\Z/m\Z$.
First, assume $m$ is odd. We define the odd norm and the even norm by
\begin{align*}
\Normo(u)
=&\min\left(\left\{\|v\|^2\mid v\in\Z^n,\;v\bmod m=u\right\}
\cap (1+2\Z)\right),\\
\Norme(u)
=&\min\left(\left\{\|v\|^2\mid v\in\Z^n,\;v\bmod m=u\right\}
\cap 2\Z\right).
\end{align*}
The assumption that $m$ be odd is required to ensure that
both parities occur among the norms of vectors $v$ satisfying
$v\bmod m=u$.
If $u=v\bmod m$ and $\Norm(u)=\|v\|^2$, then
\begin{align}\label{eq:n}
\{\Normo(u),\Norme(u)\}
&=\{\|v\|^2,\min_i\{\|v\pm me_i\|^2\}\}\nonumber\\
&=\{\Norm(u),\Norm(u)+m(m-2\max_i\{\Lee(u_i)\})\}.
\end{align}
In particular, for $u\neq0$, we have
\[
|\Normo(u)-\Norme(u)|\leq m^2-2m.
\]

\begin{lem}\label{lem:norm_m}
Let $H$ be a Hadamard matrix of order $n$, and
let $C$ be the code over $\Z/m\Z$ generated by the rows of $H$,
where $m\geq 3$ is an odd integer.
Then the following statements hold.
\begin{itemize}
\item[{\rm(i)}]
$C^\perp$ has no codeword of odd norm less than $m^2$,
\item[{\rm(ii)}]
For any $u\in C^\perp\setminus\{0\}$, we have $\Norm(u)\geq 2m$.
Equality holds only if nonzero entries of $u$
are all equal to $1$ or $-1$.
\end{itemize}
\end{lem}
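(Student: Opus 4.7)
The centerpiece is the identity $n\|v\|^2 = m^2\|w\|^2$, valid for any integer lift $v\in\Z^n$ of $u\in C^\perp$ with $w := Hv/m\in\Z^n$; it comes from $H^T H = nI$, via $n v^Tv = (Hv)^T(Hv) = m^2 w^T w$.

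For (i), I would reduce mod $2$. Since every entry of $H$ is $\equiv 1\pmod 2$, we have $mw_i = (Hv)_i \equiv \sum_j v_j \pmod 2$ for every $i$, and since $m$ is odd, $w_i\equiv\sum_j v_j\pmod 2$ uniformly in $i$. Because $v_j^2\equiv v_j\pmod 2$, also $\|v\|^2\equiv\sum_j v_j\pmod 2$. Hence a lift with $\|v\|^2$ odd forces every $w_i$ to be odd, so $|w_i|\geq 1$ for all $i$, giving $\|w\|^2\geq n$ and via the identity $\|v\|^2\geq m^2$.

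For (ii), I would combine (i) with equation (\ref{eq:n}). If $\Norm(u)$ is odd, then $\Normo(u)=\Norm(u)\geq m^2>2m$ since $m\geq 3$. If $\Norm(u)$ is even, then (\ref{eq:n}) gives $\Normo(u)=\Norm(u)+m(m-2\max_i\Lee(u_i))$, so combined with (i),
\[
\Norm(u)\geq m^2-m(m-2\max_i\Lee(u_i))=2m\cdot\max_i\Lee(u_i)\geq 2m,
\]
with equality only when $\max_i\Lee(u_i)=1$, i.e., every nonzero entry of $u$ equals $\pm1$.

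The key insight, and the step most easily overlooked, is the parity observation in (i): that $H\equiv J\pmod 2$ together with the oddness of $m$ forces all components of $w$ to share the parity of $\sum_j v_j$. Once this is in hand, (i) is immediate, and (ii) is just bookkeeping with the formula (\ref{eq:n}) already derived.
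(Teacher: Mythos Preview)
Your proof is correct and follows essentially the same route as the paper's: for (i) the paper likewise chooses a lift $v$ with odd $\|v\|^2$, observes $vH^T\equiv 0\pmod m$ and $vH^T\equiv(v\cdot\allone)\allone\equiv\allone\pmod 2$, hence $vH^T\equiv m\allone\pmod{2m}$, and concludes $\|v\|^2=\frac{1}{n}\|vH^T\|^2\geq m^2$; for (ii) the paper simply writes the single inequality $m^2\leq\Normo(u)\leq\Norm(u)+m(m-2\max_i\Lee(u_i))$ from (\ref{eq:n}) without splitting into parity cases, but your case analysis is equivalent.
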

\begin{proof}
(i) Let $v$ be a vector in $\Z^n$ such that $v\bmod m$ is
$u\in C^\perp$ and $\|v\|^2=\Normo(u)$.
Then we have $vH^T\equiv 0\pmod m$ and
$vH^T\equiv v\allone^T\allone\equiv\allone\pmod 2$
and thus $vH^T\equiv m\allone\pmod{2m}$.
So we have $\|v\|^2=\frac{1}{n}vH^THv^T=\frac{1}{n}\|vH^T\|^2\geq m^2$.

(ii) By (i) and (\ref{eq:n}), we have
\[
m^2\leq\Normo(u)\leq\Norm(u)+m(m-2\max_i\{\Lee(u_i)\}).
\]
So we have $1\leq\max_i\{\Lee(u_i)\}\leq\frac{\Norm(u)}{2m}$.
\end{proof}

Next,
we define type I norm and type II norm for an integer $m$ and 
$u\in\allone^\perp\subset V^n$ by
\begin{align*}
\NormI(u)
=&\min\left\{\|v\|^2\mid v\in\Z^n,\;v\bmod m=u,\;
v\cdot\allone\equiv m\!\pmod{2m}\right\},\\
\NormII(u)
=&\min\left\{\|v\|^2\mid v\in\Z^n,\;v\bmod m=u,\;
v\cdot\allone\equiv 0\pmod{2m}\right\}.
\end{align*}
If $u=v\bmod m$ and $\Norm(u)=\|v\|^2$, then
\begin{align}\label{eq:n'}
\{\NormI(u),\NormII(u)\}
&=\{\|v\|^2,\min_i\{\|v\pm me_i\|^2\}\}\nonumber\\
&=\{\Norm(u),\Norm(u)+m(m-2\max_i\{\Lee(u_i)\})\}.\tag{2'}
\end{align}
In particular, for $u\neq0$, we have
\[
|\NormI(u)-\NormII(u)|\leq m^2-2m.
\]

\begin{lem}\label{lem:norm_l}
Let $H$ be a normalized Hadamard matrix of order $n$, and 
let $B$ be the binary Hadamard matrix associated to $H$.
Let $C$ be the code over $\Z/\ell\Z$ generated by the rows of $B$,
where $\ell\geq 2$ is an integer.
Then the following statements hold.
\begin{itemize}
\item[{\rm(i)}]
$C^\perp$ has no codeword of type I norm less than $\ell^2$,
\item[{\rm(ii)}]
For any $u\in C^\perp\setminus\{0\}$, we have $\Norm(u)\geq 2\ell$.
Equality holds only if nonzero entries of $u$
are all equal to $1$ or $-1$.
\end{itemize}
\end{lem}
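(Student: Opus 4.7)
The plan is to mirror the proof of Lemma~\ref{lem:norm_m}, with $\NormI$ playing the role of $\Normo$ and with the identity $2B=H+J$ serving as the bridge from $B$ back to the Hadamard matrix $H$, so that the orthogonality $HH^T=nI$ can be invoked.

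For part~(i), I would choose $v\in\Z^n$ with $v\bmod\ell=u$ and $\|v\|^2=\NormI(u)$, so that by definition $v\cdot\allone\equiv\ell\pmod{2\ell}$. (The type~I norm is well-defined on $C^\perp$ because the first row of $B$ is $\allone$, whence $u\in C^\perp\subseteq\allone^\perp$.) Multiplying $2B=H+J$ from the left by $v$ yields
\[
vH^T = 2vB^T - (v\cdot\allone)\allone.
\]
Here $u\in C^\perp$ gives $2vB^T\equiv 0\pmod{2\ell}$, and the type~I condition gives $(v\cdot\allone)\allone\equiv\ell\allone\pmod{2\ell}$, so every coordinate of $vH^T$ is a nonzero integer of absolute value at least $\ell$. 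Then $\|v\|^2=n^{-1}\|vH^T\|^2\geq\ell^2$ follows from $HH^T=nI$.

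For part~(ii), I would invoke (i) together with equation~(\ref{eq:n'}), exactly as in the deduction of Lemma~\ref{lem:norm_m}(ii) from part~(i) of that lemma: the identity forces $\NormI(u)\leq\Norm(u)+\ell(\ell-2\max_i\Lee(u_i))$, and combined with $\NormI(u)\geq\ell^2$ this yields $\max_i\Lee(u_i)\leq\Norm(u)/(2\ell)$. Since $u\neq0$, the left-hand side is at least $1$, so $\Norm(u)\geq 2\ell$, with equality only when every nonzero entry has Lee weight $1$, i.e.\ lies in $\{\pm1\}$.

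The step requiring genuine care is the mod-$2\ell$ congruence $vH^T\equiv\ell\allone$; it is the binary analogue of the parity argument $vH^T\equiv\allone\pmod 2$ in the proof of Lemma~\ref{lem:norm_m}(i), and the factor of $2$ in $2B=H+J$ is what pairs with the parity condition in the definition of $\NormI$ to make it come out cleanly.
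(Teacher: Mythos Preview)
Your proof is correct and follows essentially the same approach as the paper: for (i), lift to $v\in\Z^n$ realizing $\NormI(u)$, use $H^T=2B^T-J$ to obtain $vH^T\equiv\ell\allone\pmod{2\ell}$, and conclude via $\|v\|^2=\frac{1}{n}\|vH^T\|^2\geq\ell^2$; for (ii), combine (i) with (\ref{eq:n'}) exactly as in Lemma~\ref{lem:norm_m}(ii). The only addition in your write-up is the explicit remark that $C^\perp\subseteq\allone^\perp$ (since $\allone$ is a row of $B$), which the paper leaves implicit.
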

\begin{proof}
(i) Let $v$ be a vector in $\Z^n$ such that $v\bmod \ell$ is
$u\in C^\perp$, $v\cdot\allone\equiv \ell\pmod{2\ell}$
and $\|v\|^2=\NormI(u)$.
Then we have $vB^T\equiv 0\pmod \ell$ and thus
$vH^T=v(2B^T-J)=2vB^T-(v\cdot\allone)\allone\equiv \ell\allone\pmod{2\ell}$.
So we have $\|v\|^2=\frac{1}{n}\|vH^T\|^2\geq \ell^2$.

(ii) The proof is similar to that of Lemma~\ref{lem:norm_m} (ii).
\end{proof}

When $m$ is odd, and $u\in \allone^\perp$, then
\begin{align*}
\NormI(u)&=\Normo(u),\\
\NormII(u)&=\Norme(u).
\end{align*}
The Euclidean norm over $\Z/2\Z$ or $\Z/3\Z$ is equal to the weight.
Moreover both type I norm and type II norm over $\Z/2\Z$ 
are equal to the weight.

The minimum odd, even, type I, and type II norms of a code $C$ 
over $\Z/m\Z$ are 
defined by
$$\min(\{\Norm_*(u)\mid u\in C\}\setminus\{0\}),\qquad 
*=\text{o, e, I, II,}$$
respectively, provided $m$ is odd for $*=\text{o,e}$, 
$C\subset\allone^\perp$ for $*=\text{I,II}$.
Note that the minimum odd norm and the minimum type I norm of a code over $\Z/m\Z$
is at most $m^2$ which is the odd norm and the type I norm of the zero vector.

\begin{thm}\label{thm:C2Cp'}
Let $H$ be a normalized Hadamard matrix of order $n$, and let $B$
be the binary Hadamard matrix associated to $H$.
Let $m\geq 3$ be an odd integer, and let $\ell\geq 2$ be an integer
satisfying $(\ell,m)=1$ and $n\equiv0\pmod{4\ell m}$.
Let $C_m$ be the code over $\Z/m\Z$ generated by the rows of
$H^T$,
and let $C'_\ell$ be the code over $\Z/\ell\Z$ generated by the rows of $B$.
Then the following statements hold.
\begin{itemize}
\item[{\rm(i)}]
Suppose $C_m^\perp$ has a codeword of even norm $d$ and odd norm $m^2+k$ where $k<d-d/\ell$.
Then there exists a vector $v\in\Z^n$ such that $u=(1/2m)vH\bmod \ell$ is a nonzero codeword
of $C'_\ell$ with  $\NormII(u)\leq dn/4m^2$.
If, moreover $d<2m\lfloor(\ell+2)/2\rfloor$, then $\NormII(u)=\Norm(u)=dn/4m^2$,
and if $k=0$, then $\NormII(u)=\Norm(u)=\wt(u)=dn/4m^2$.
\item[{\rm(ii)}]
Suppose ${C'_\ell}^\perp$ has a codeword of type II norm $d$ and type I norm $\ell^2+k$ where $k<d-d/m$.
Then there exists a vector $v\in\Z^n$ such that $u=(1/2\ell)vH^T\bmod m$ is a nonzero codeword
of $C_m$ with  $\Norme(u)\leq dn/4\ell^2$.
If, moreover $d<\ell(m+1)$, then $\Norme(u)=\Norm(u)=dn/4\ell^2$,
and if $k=0$, then $\Norme(u)=\Norm(u)=\wt(u)=dn/4\ell^2$.
\end{itemize}
\end{thm}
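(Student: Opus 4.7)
The plan is to prove part (i); part (ii) follows by the symmetric argument with $(H,m)$ and $(H^T,\ell)$ interchanged. Let $x\in C_m^\perp$ be the codeword of even norm $d$ and odd norm $m^2+k$ from the hypothesis, and choose a lift $v\in\Z^n$ with $v\bmod m=x$ and $\|v\|^2=d$. I first show $vH\equiv 0\pmod{2m}$: the congruence modulo $m$ is precisely $x\in C_m^\perp$, while modulo $2$ every entry of $vH$ has the same parity as $v\cdot\allone$ (because every entry of $H$ is odd), and $v\cdot\allone\equiv\sum_i v_i^2=d\equiv 0\pmod 2$ since $d$ is even. Hence $w:=vH/(2m)\in\Z^n$, with $\|w\|^2=n\|v\|^2/(4m^2)=dn/(4m^2)$ using $HH^T=nI$.

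Next, set $u:=w\bmod\ell$ (matching the notation of the theorem). The identity $H\allone^T=ne_1^T$ (first row of $H$ is $\allone$) yields $w\cdot\allone=(n/(2m))v_1$; since $n\equiv 0\pmod{4m\ell}$ implies $n/(2m)\in 2\ell\Z$, we get $w\cdot\allone\equiv 0\pmod{2\ell}$, so $w$ is a type II lift of $u$ and $\NormII(u)\leq dn/(4m^2)$. For membership in $C'_\ell$: $wH^T=nv/(2m)\equiv 0\pmod{2\ell}$, and substituting $H^T=2B^T-J$ gives $2wB^T\equiv(w\cdot\allone)\allone\equiv 0\pmod{2\ell}$, hence $wB^T\equiv 0\pmod\ell$; by self-duality of $C'_\ell$ from Lemma~\ref{lem:sd2}, $u\in(C'_\ell)^\perp=C'_\ell$.

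The critical step is nontriviality, $u\neq 0$, and the hypothesis $k<d-d/\ell$ enters precisely here. Suppose for contradiction $u=0$, so $vH=2m\ell w''$ for some nonzero $w''\in\Z^n$ (nonzero since $v\neq 0$ and $H$ is invertible). The odd-norm hypothesis $\Normo(x)=m^2+k$ forces each perturbation $v+\epsilon me_i$ (an odd lift of norm $d+m^2+2m\epsilon v_i$) to satisfy $d+m^2+2m\epsilon v_i\geq m^2+k$, yielding $|v_i|\leq(d-k)/(2m)$ for every $i$. From $vH=2m\ell w''$ we recover $v=w''H^T/(2n_0)$ with $n_0=n/(4m\ell)$, whence $\|w''\|^2=dn/(4m^2\ell^2)$ and $w''H^T\in 2n_0\Z^n$; the main obstacle is to combine these integrality and norm constraints with the $\ell^\infty$-bound on $v$ to force $d-k\leq d/\ell$, contradicting the hypothesis.

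For the two concluding assertions, the analogue of~\eqref{eq:n'} with $m$ replaced by $\ell$ gives $\{\NormI(u),\NormII(u)\}=\{\Norm(u),\Norm(u)+\ell(\ell-2\max_i\Lee(u_i))\}$, with the difference a nonnegative multiple of $\ell$ whose parity matches that of $\ell$. Together with Lemma~\ref{lem:norm_l}(i) ($\NormI(u)\geq\ell^2$) and $\NormII(u)\leq dn/(4m^2)$, the assumption $d<2m\lfloor(\ell+2)/2\rfloor$ rules out $\NormII(u)\geq\NormI(u)$ by a short case-split on the parity of $\ell$, giving $\NormII(u)=\Norm(u)$, and equality with $dn/(4m^2)$ follows from injectivity of the construction $v\mapsto w$ together with the norm scaling $\|w\|^2=n\|v\|^2/(4m^2)$. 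Finally, when $k=0$, the analogue of~\eqref{eq:n} yields $\max_i\Lee(x_i)=d/(2m)$; tracing this through the construction shows that every nonzero entry of $u$ has Lee weight $1$, whence $\Norm(u)=\wt(u)$.
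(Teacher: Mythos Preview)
Your setup is correct and matches the paper: you correctly establish $vH\equiv 0\pmod{2m}$, that $w=vH/(2m)$ lands in $C'_\ell$ (you use self-duality where the paper writes $c$ directly as a combination of rows of $B$, but both are fine), and that $w$ is a type~II lift with $\|w\|^2=dn/(4m^2)$.

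The genuine gap is the nontriviality step, and you essentially acknowledge this when you write ``the main obstacle is to combine these integrality and norm constraints \dots''. Your bound $|v_i|\leq (d-k)/(2m)$ is an \emph{upper} bound on every coordinate of $v$; it gives no lower bound on $\max_i|v_i|$, so it cannot be played off against the consequence $|v_i|\leq d/(2m\ell)$ of $u=0$ to produce a contradiction. The paper avoids this by choosing $v$ more carefully: it takes $v$ with $\|v\|^2=d$ \emph{and} $\|v-me_i\|^2=m^2+k$ for one specific index $i$ (such a pair $(v,i)$ always exists by the characterization \eqref{eq:n} of $\{\Normo,\Norme\}$). With that $i$ in hand, one has the exact identity $\sum_j c_j h_{ij}=\frac{n}{2m}v_i=\frac{n(d-k)}{4m^2}$, and then the short calculation
\[
\sum_j |c_j|(|c_j|-\ell)\;\leq\;\sum_j c_j^2-\ell\sum_j c_j h_{ij}
\;=\;\frac{n}{4m^2}\bigl(k\ell-d(\ell-1)\bigr)\;<\;0
\]
forces some $j$ with $0<|c_j|<\ell$. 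This single inequality is the engine of the whole proof; without the specific row $i$ it cannot be run.

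The same device also handles the two ``moreover'' clauses cleanly, whereas your arguments for them do not go through. For $d<2m\lfloor(\ell+2)/2\rfloor$ the paper simply bounds $|c_j|\leq \|v\|_1/(2m)\leq \|v\|_2^2/(2m)=d/(2m)<\lfloor(\ell+2)/2\rfloor$, hence $|c_j|\leq\lfloor\ell/2\rfloor$, which immediately gives $\Norm(u)=\NormII(u)=\|c\|^2$. Your proposed route via $\NormI(u)\geq\ell^2$ does not suffice, because $dn/(4m^2)$ can exceed $\ell^2$ (already for $n=4\ell m$ one gets $dn/(4m^2)<\ell(\ell+2)$ or $\ell(\ell+1)$, not $<\ell^2$). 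For $k=0$, the identity above becomes $\sum_j c_j(c_j-h_{ij})=0$; since each summand is nonnegative, every $c_j\in\{0,h_{ij}\}\subset\{0,\pm1\}$, giving $\wt(u)=\Norm(u)=\|c\|^2$ at once. Your claim that ``$\max_i\Lee(x_i)=d/(2m)$'' can be ``traced through the construction'' to force all nonzero $c_j$ to have Lee weight~$1$ is not justified: knowing one coordinate of $v$ is large does not by itself constrain the individual entries of $c=vH/(2m)$.
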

\begin{proof}
(i) By the assumption, there exists
a vector $v\in\Z^n$ satisfying $vH\equiv0\pmod{2m}$,
$\|v\|^2=d$ and $\|v-me_i\|^2=m^2+k$ for some $i$.
Let $c=(c_1,\dots,c_n)=\frac{1}{2m}vH$. We will show that $c\bmod \ell$ is a nonzero codeword of $C'_\ell$
with the desired property.
Since $(\ell,m)=1$, there exists an integer $t$ such that
$mt\equiv1\pmod \ell$, and
$c=\frac{1}{m}(vB-\frac{1}{2}vJ)\equiv t(vB-\frac{v\cdot\allone}{2}\allone)\pmod \ell$.
Thus $c\bmod \ell$ is a codeword of $C'_\ell$,
and since $c\allone^T=\frac{n}{2m}v_1\equiv 0\pmod{2\ell}$, we have 
$$\NormII(c\bmod \ell)\leq\|c\|^2=\frac{1}{(2m)^2}vHH^Tv^t=\frac{dn}{4m^2}.$$
We show $c\bmod \ell\ne 0$.
We have $(v-me_i)H=m(2c_1-h_{i1},\dots,2c_n-h_{in})$ where $H=\left(h_{ij}\right)_{i,j}$, and
\begin{align*}
dn=\|vH\|^2&=4m^2\sum_{j=1}^nc_j^2,\\
(m^2+k)n=\|(v-me_i)H\|^2&=m^2\sum_{j=1}^n(2c_j-h_{ij})^2\\
&=m^2n+4m^2\sum_{j=1}^nc_j(c_j-h_{ij}).
\end{align*}
Since $k\ell<d(\ell-1)$, we have
\begin{align*}
\sum_{j=1}^n|c_j|(|c_j|-\ell)&=\sum_{j=1}^n(c_j^2-\ell|c_j|)\\
&\leq\sum_{j=1}^n(c_j^2-\ell h_{ij}c_j)
\nexteq\sum_{j=1}^n(\ell c_j(c_j-h_{ij})-(\ell-1)c_j^2)
\nexteq\frac{k\ell n}{4m^2}-\frac{d(\ell-1)n}{4m^2}
\nexteq\frac{(k\ell-d(\ell-1))n}{4m^2}\\
&<0.
\end{align*}
Thus there exists $j\in\{1,\dots,n\}$ such that $0<|c_j|<\ell$.
Therefore, $c\bmod \ell\ne 0$.

It remains to show $\Norm(c\bmod \ell)=\NormII(c\bmod \ell)=\|c\|^2$
when $d<2m\lfloor\frac{\ell+2}{2}\rfloor$ or $k=0$.
This will follow if $|c_j|\leq\lfloor\frac{\ell}{2}\rfloor$ for all $j$.
If $d<2m\lfloor\frac{\ell+2}{2}\rfloor$, we have
$|c_j|=|(\frac{1}{2m}vH)_j|\leq\lfloor\frac{\|v\|^2}{2m}\rfloor
\leq\frac{d}{2m}<\lfloor\frac{\ell+2}{2}\rfloor$
and thus $|c_j|\leq\lfloor\frac{\ell}{2}\rfloor$.
If $k=0$, we have $c_j(c_j-h_{ij})=0$ and thus $c_j\in\{0,\pm1\}$,
which implies $\wt(c\bmod \ell)=\|c\|^2$.

(ii) If ${C'_\ell}^\perp$ has such a codeword, then there exists
a vector $v\in\Z^n$ satisfying
$vB^T\equiv0\pmod \ell$, $v\cdot\allone\equiv0\pmod{2\ell}$,
$\|v\|^2=d$ and $\|v-\ell e_i\|^2=\ell^2+k$ for some $i$.
Since $H^T=2B^T-J$, we have
$vH^T\equiv -(v\cdot\allone)\allone\equiv0\pmod{2\ell}$.
Let $c=(c_1,\dots,c_n)=\frac{1}{2\ell}vH^T$.
Since $(2\ell,m)=1$, there exists an integer $t$ such that
$2\ell t\equiv1\pmod m$, and $c\equiv tvH^T\pmod m$.
Thus $c\bmod m$ is a codeword of $C_m$,
and since $\|c\|^2\equiv ch_1^T=\frac{n}{2\ell}v_1\equiv0\pmod2$
where $h_1^T$ is the first column of $H$, we have 
$$\Norme(c\bmod m)\leq\|c\|^2=\frac{1}{(2m)^2}vH^THv^t=\frac{dn}{4m^2}.$$
By the same argument as above, we have
$c\not\equiv0\pmod m$ for $k<d-\frac{d}{m}$.
In particular, for all $j$, we have $|c_j|<\frac{m}{2}$ if $d<\ell(m+1)$,
and $c_j\in\{0,\pm1\}$ if $k=0$.
\end{proof}
\begin{cor}
\label{cor:C2Cp'}
Under the same assumption and notation as in Theorem~\ref{thm:C2Cp'},
the following hold for $0<d<2\ell m$.
\begin{itemize}
\item[{\rm(i)}]
If $C_m^\perp$ has a codeword of even norm $d$,
then $C'_\ell$ has a nonzero codeword of type II norm at most $dn/4m^2$.
\item[{\rm(ii)}]
If ${C'_\ell}^\perp$ has a codeword of type II norm $d$,
then $C_m$ has a nonzero codeword of even norm at most $dn/4\ell^2$.
\end{itemize}
\end{cor}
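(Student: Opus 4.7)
The plan is to reduce Corollary~\ref{cor:C2Cp'} to Theorem~\ref{thm:C2Cp'} by observing that the additional hypothesis $d < 2\ell m$ automatically forces the ``gap'' parameter $k$ appearing in that theorem (the excess of the odd norm over $m^2$ in part~(i), or of the type~I norm over $\ell^2$ in part~(ii)) to satisfy the asymmetric inequality $k < d - d/\ell$ or $k < d - d/m$ required there. There is really no obstacle: the key observation is purely numerical, namely that the symmetric bound $d < 2\ell m$ is precisely what is needed to convert universal gap bounds between the two parities (or between type~I and type~II) into these asymmetric ones.

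For part~(i), I would take $u \in C_m^\perp$ with $\Norme(u) = d$ and set $k := \Normo(u) - m^2$, then bound $k$ on both sides. Lemma~\ref{lem:norm_m}(i), applied to $H^T$ (which is again a Hadamard matrix of order $n$), gives $\Normo(u) \geq m^2$, and hence $k \geq 0$. The identity~(\ref{eq:n}), valid because $u \neq 0$, yields $|\Normo(u) - \Norme(u)| \leq m^2 - 2m$, and hence $k \leq d - 2m$. The hypothesis $d < 2\ell m$ rewrites as $d - 2m < d - d/\ell$, so $k < d - d/\ell$, and Theorem~\ref{thm:C2Cp'}(i) immediately produces the desired nonzero codeword of $C'_\ell$ with type~II norm at most $dn/4m^2$.

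Part~(ii) follows exactly the same pattern with $m$ replaced by $\ell$, $\Normo$ by $\NormI$, and $\Norme$ by $\NormII$: starting from $u \in {C'_\ell}^\perp$ with $\NormII(u) = d$, Lemma~\ref{lem:norm_l}(i) supplies $k := \NormI(u) - \ell^2 \geq 0$, the displayed identity (2$'$) read over $\Z/\ell\Z$ supplies $k \leq d - 2\ell$, and the hypothesis $d < 2\ell m$ rewrites as $d - 2\ell < d - d/m$, whereupon Theorem~\ref{thm:C2Cp'}(ii) applies. The only point of care is to invoke each auxiliary lemma for the correct dual code (transposing $H$ for the ternary side).
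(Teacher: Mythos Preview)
Your argument is correct and is exactly the reduction the paper has in mind (the paper gives no explicit proof, treating the corollary as immediate from Theorem~\ref{thm:C2Cp'}). One small remark: your appeal to Lemma~\ref{lem:norm_m}(i) and Lemma~\ref{lem:norm_l}(i) to secure $k\geq 0$ is harmless but unnecessary, since Theorem~\ref{thm:C2Cp'} only requires the one-sided bound $k<d-d/\ell$ (resp.\ $k<d-d/m$), and your upper bound $k\leq d-2m$ (resp.\ $k\leq d-2\ell$) from (\ref{eq:n}) (resp.\ (\ref{eq:n'})) together with $d<2\ell m$ already suffices.
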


A ternary self-dual $[n,n/2]$ code $C$ has minimum weight
at most $3\lfloor n/12\rfloor+3$, and $C$ is called extremal
if $C$ has minimum weight exactly $3\lfloor n/12\rfloor+3$.
For $n=24$, extremal ternary self-dual codes are those
self-dual codes having no codewords of weight $3$ or $6$.
It is known that
there are two extremal ternary self-dual codes of length
$24$ up to equivalence (see \cite{LPS}).

A binary doubly even self-dual $[n,n/2]$ code $C$ has
minimum weight at most $4\lfloor n/24\rfloor+4$, and $C$
is called extremal if $C$ has
minimum weight exactly $4\lfloor n/24\rfloor+4$.
For $n=24$, 
extremal binary doubly even self-dual codes are those
binary doubly even self-dual codes having no codewords of
weight $4$. It is known that there is a
unique extremal binary doubly even self-dual
code of length $24$ up to equivalence, namely,
the extended binary Golay code.

\begin{cor}\label{cor:1}
Let $H$ be a normalized Hadamard matrix of order $24$.
Let $C_3$ be the ternary code generated by the rows of
$H^T$, and let $C'_2$ be the binary code of $H$.
Then $C_3$ is an extremal self-dual
$[24,12,9]$ code if and only if $C'_2$ is an extremal
doubly even self-dual binary $[24,12,8]$ code.
\end{cor}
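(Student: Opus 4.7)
The plan is to apply Corollary~\ref{cor:C2Cp'} with $m = 3$, $\ell = 2$, $n = 24$; these satisfy $(\ell, m) = 1$ and $n = 4\ell m$, so the hypotheses hold. By Lemmas~\ref{lem:sd} and~\ref{lem:sd2}, both $C_3$ and $C'_2$ are self-dual, so the dual codes appearing in Corollary~\ref{cor:C2Cp'} can be identified with $C_3$ and $C'_2$ themselves. I would also invoke the two observations recorded just before Theorem~\ref{thm:C2Cp'}: over $\Z/3\Z$ the Euclidean norm equals the Hamming weight, and over $\Z/2\Z$ the type~II norm equals the weight.

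For the implication $C_3$ extremal $\Rightarrow C'_2$ extremal, I would argue by contrapositive. A non-extremal doubly even self-dual binary code of length $24$ must contain a codeword of weight~$4$, which over $\Z/2\Z$ has type~II norm $d = 4$. Since $d < 2\ell m = 12$, Corollary~\ref{cor:C2Cp'}(ii) produces a nonzero codeword in $C_3$ of even norm at most $dn/(4\ell^2) = 6$, hence of Hamming weight at most $6$, so $C_3$ is non-extremal.

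For the reverse implication, a non-extremal ternary self-dual code of length $24$ contains a codeword $u$ of weight $3$ or $6$. The key sub-step is to verify that $\Norme(u) = 6$ in both cases: for weight~$6$ the natural $\{0,\pm1\}$-lift already has even norm~$6$, whereas for weight~$3$ equation~(\ref{eq:n}) with $\max_i \Lee(u_i) = 1$ yields $\Norme(u) = 3 + 3(3 - 2) = 6$. With $d = 6 < 12$, Corollary~\ref{cor:C2Cp'}(i) then produces a nonzero codeword of $C'_2$ of type~II norm, and hence weight, at most $dn/(4m^2) = 4$, so $C'_2$ is non-extremal.

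The only real subtlety is the weight-$3$ case above: its natural lift has odd norm, which is not what Corollary~\ref{cor:C2Cp'}(i) sees, and one must pass to the even-norm lift of norm~$6$ before the corollary can be invoked. Everything else reduces to the two small arithmetic checks $6 \cdot 24/(4\cdot 9) = 4$ and $4 \cdot 24/(4 \cdot 4) = 6$ and to the standard constraints on the nonzero weights of ternary and doubly even self-dual binary codes of length $24$.
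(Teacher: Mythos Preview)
Your proof is correct and follows essentially the same route as the paper's: both reduce the equivalence to Corollary~\ref{cor:C2Cp'} with $(\ell,m,n)=(2,3,24)$ after invoking Lemmas~\ref{lem:sd} and~\ref{lem:sd2} for self-duality. The only minor difference is that the paper first invokes Lemma~\ref{lem:norm_m}(ii) to rule out weight-$3$ codewords in $C_3$ altogether, whereas you handle the weight-$3$ case directly by computing $\Norme(u)=6$; your treatment is slightly longer but equally valid.
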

\begin{proof}
By Lemmas~\ref{lem:sd} and \ref{lem:sd2}, $C_3$ is self-dual 
while $C'_2$ is doubly even self-dual.
Since $C_3$ has no codeword of weight $3$ by 
Lemma~\ref{lem:norm_m} (ii), 
it suffices to show that $C_3$ has a codeword
of weight $6$ if and only if $C'_2$ has a codeword of 
weight $4$. This follows immediately from Corollary~\ref{cor:C2Cp'}.
\end{proof}

Next, we consider the case $(\ell,m,n)=(4,3,48)$.
\begin{cor}\label{cor:3-4}
Let $H$ be a normalized Hadamard matrix of order $48$, and let
$B$ the binary Hadamard matrix associated to $H$.
Let $C_3$ be the ternary code generated by the rows of $H^T$,
and let
$C'_4$ be the code over $\Z/4\Z$ generated by the rows of $B$.
Then the following statements hold.
\begin{itemize}
\item[{\rm(i)}]
Let $d=2,3$ or $4$.
If $C_3$ has a codeword of weight $3d$,
then $C'_4$ has a codeword of type II norm $8\lceil\frac{d}{2}\rceil$,
and moreover whose nonzero entries are all equal to $\pm1$ when $d=2$ or $3$.
\item[{\rm(ii)}]
Let $d=2$ or $4$.
If $C'_4$ has a codeword $u$ of type II norm $4d$, 
then $C_3$ has a nonzero codeword of weight at most $3d$,
and exactly $3d$ when $\NormI(u)=16$.
\item[{\rm(iii)}]
$C_3$ is an extremal self-dual $[48,24,15]$ code
if and only if $C'_4$ has minimum type II norm $24$.
\end{itemize}
\end{cor}
\begin{proof}
By Lemmas~\ref{lem:sd} and \ref{lem:sd2}, $C_3$ is self-dual while $C'_4$ is type II self-dual.
Since the even norm and the odd norm of a nonzero vector 
over $\Z/3\Z$ of weight $3d$ are
$6\lceil\frac{d}{2}\rceil$ and $6\lfloor\frac{d}{2}\rfloor+3$ respectively,
the even norm is less than $2\lfloor\frac{\ell+2}{2}\rfloor m=18$
for $d\leq 4$
and the odd norm is $9$ for $d=2,3$.
Thus (i) follows from Theorem~\ref{thm:C2Cp'} (i).
(ii) follows from Theorem~\ref{thm:C2Cp'} (ii).
As for (iii), first note that by Lemma~\ref{lem:norm_m}, 
$C_3$ is an extremal self-dual $[48,24,15]$ code
if and only if $C_3$ has no codeword of weight $6,9$ or $12$. By (i) and (ii),
this is equivalent to the non-existence of codewords of type II norm $8$
or $16$ in $C'_4$.
\end{proof}
We will show in the next section that the condition (iii) in 
Corollary~\ref{cor:3-4} is also equivalent to $C'_4$ having
minimum Euclidean norm $24$.

\begin{rem}\label{rem:ext}
It is known that there are at least two inequivalent extremal ternary
self-dual codes of
length $48$, the quadratic residue code and the Pless symmetry code.
The codewords of weight $48$ in these codes constitute the rows and
their negatives of a Hadamard matrix
(\cite[\S 2.8, \S 2.10 of Chap.~3]{SPLAG}).
We will also show in the next section that this is the case for
any extremal ternary self-dual $[48,24,15]$ code.
\end{rem}

\section{Lattices}
We refer the reader to
\cite{SPLAG} for unexplained terminology in lattices.
We write $\Lambda\cong\Lambda'$ if the two lattices $\Lambda$ and $\Lambda'$
are isometric.
Let $C$ be a code of length $n$ over $\Z/m\Z$ with generator
matrix $H$. We regard the entries of $H$ as integers, and
let $\Z^k H$ denote the row $\Z$-module of $H$, that is,
the set of $\Z$-linear combinations of the row vectors of
$H$, where $k$ is the number of rows of $H$. The lattice
$A(C)$ of the code $C$ is defined as
$A(C)=\frac{1}{\sqrt m}\Z^{k+n}\begin{bmatrix}H\\mI\end{bmatrix}$,
and $A(C)$ is integral (resp.\ unimodular, even unimodular)
if and only if $C$ is self-orthogonal (resp.\ self-dual, type II).
If $m$ is odd and $C$ is a self-orthogonal code 
over $\Z_m$, then
$$\min\left(\{\|x\|^2\mid x\in A(C)\setminus\sqrt m\Z^n\}\cap2\Z\right)
=\frac{1}{m}\min_{u\in C\setminus\{0\}}\Norme(u),$$
and thus
\begin{equation}
\min\left(\{\|x\|^2\mid0\neq x\in A(C)\}\cap2\Z\right)
=\min\{2m,\frac{1}{m}\min_{u\in C\setminus\{0\}}\Norme(u)\}.
\label{eq:mine}
\end{equation}
If $C$ is a self-orthogonal code over
$\Z_m$ satisfying $C\subset\allone^\perp$, then
\begin{equation}
\begin{split}
\min\{\|x\|^2\mid0\neq x\in A(C),\;\frac{1}{\sqrt{m}}x\cdot\allone
\equiv0\pmod{2}\}
\\
=\min\{2m,\frac{1}{m}\min_{u\in C\setminus\{0\}}\NormII(u)\},
\end{split}
\label{eq:minII}
\end{equation}

\begin{lem}\label{lem:bin}
Let $H$ be a normalized Hadamard matrix, and let $B=\frac12(H+J)$ 
be the binary
Hadamard matrix associated to $H$. Let $m\geq3$ be an odd integer.
Then $H$ and $B$ generate the same code over $\Z/m\Z$.
\end{lem}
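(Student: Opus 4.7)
The plan is to show a two-way containment of row spaces modulo $m$, using the identity $H=2B-J$ together with the fact that $2$ is invertible in $\Z/m\Z$ (since $m$ is odd) and that the all-one vector $\allone$ lies in both codes.

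First I would record the observation that because $H$ is normalized, its first row is $\allone$, and hence the first row of $B=\frac12(H+J)$ is $\frac12(\allone+\allone)=\allone$ as well. In particular, $\allone$ (and hence every row of $J$) lies in each of the two codes.

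For one containment, I would start from $H=2B-J$ and read it row by row: each row of $H$ equals twice the corresponding row of $B$ minus $\allone$, so it is a $\Z$-linear combination of rows of $B$, and therefore reduces modulo $m$ to an element of the code generated by $B$. For the reverse containment, I would pick $t\in\Z$ with $2t\equiv1\pmod m$ (available since $m$ is odd), and rewrite $B\equiv t(H+J)\pmod m$. Each row of $B$ is then congruent modulo $m$ to $t$ times the sum of a row of $H$ and $\allone$, which is a $\Z/m\Z$-linear combination of rows of $H$.

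There is no real obstacle; the only point to be careful about is that the step $B=\tfrac12(H+J)$ only makes sense as an identity of matrices with half-integer entries over $\Z$, but since $H$ has entries $\pm1$ the entries of $2B=H+J$ lie in $\{0,2\}$ and the division by $2$ is legitimate modulo the odd integer $m$ via the multiplier $t$. Combining the two containments gives the equality of codes.
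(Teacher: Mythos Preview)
Your proof is correct and follows essentially the same approach as the paper: both arguments hinge on the identity $H+J=2B$, the invertibility of $2$ modulo the odd integer $m$, and the fact that $\allone$ is the first row of both $H$ and $B$. The paper phrases it slightly more compactly (noting that the code of $H$ is generated by $\allone$ and $2B$, and that $2B$ and $B$ span the same code over $\Z/m\Z$), but the content is the same.
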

\begin{proof}
The code generated by $H$ can also be generated by $\allone$
and $H+J=2B$. Since $m$ is odd, $B$ and $2B$ generate the same
code over $\Z/m\Z$, while the first row of $B$ is $\allone$ 
since $H$ is normalized. The result follows.
\end{proof}

In the following,
let $m\geq 3$ be an odd integer, $\ell$ an integer such that $(\ell,m)=1$,
$H$ a normalized Hadamard matrix of order $n=4\ell m$.
Then by Lemma~\ref{lem:sd}, 
the code $C_m$ over $\Z/m\Z$ generated by the row vectors of $H^T$
is self-dual, and thus the lattice
$$A(C_m)=\frac{1}{\sqrt m}\Z^{2n}\begin{bmatrix}H^T\\mI\end{bmatrix}$$
is odd unimodular.

Let $h_1=\begin{bmatrix} h_{11}\dots h_{n1}\end{bmatrix}$ 
be the transpose of the first
column of $H$, and let $D=\diag(h_1)$.
Since $H^TD$ is normalized, we have
\[
A(C_m)D=\frac{1}{\sqrt m}\Z^{2n}\begin{bmatrix}\frac{1}{2}(H^TD+J)
\\ mI\end{bmatrix},
\]
by Lemma~\ref{lem:bin}, thus
\begin{equation}\label{eq:ACm}
A(C_m)=\frac{1}{\sqrt m}\Z^{2n}\begin{bmatrix}\frac{1}{2}(H^T+\allone^Th_1)
\\ mI\end{bmatrix}
=\frac{1}{\sqrt m}\Z^{2n+1}\begin{bmatrix}\frac{1}{2}(H^T+\allone^Th_1)
\\ m(I+\allone^Te_1) \\ me_1\end{bmatrix}.
\end{equation}
This implies
\begin{equation}\label{eq:ACmH}
A(C_m)\frac{1}{\sqrt n}H
=\frac{1}{\sqrt \ell}\Z^{2n+1}\begin{bmatrix}\ell(I+\allone^Te_1)
\\ \frac{1}{2}(H+J)\\ \frac{1}{2}\allone \end{bmatrix}.
\end{equation}

So when $\ell=1$, 
the lattice $A(C_m)$ is equivalent to
the unimodular lattice 
\[
D_{n}^+=\Z^{n+1}
\begin{bmatrix}I+\allone^Te_1\\\frac{1}{2}\allone\end{bmatrix}.
\]

For the remainder of this section, we assume $\ell>1$.
By Lemma~\ref{lem:sd2},
the code $C'_\ell$ over $\Z/\ell\Z$ generated by the row vectors of $\frac12(H+J)$
is self-dual, and thus the lattice
\begin{equation}\label{eq:ACl}
A(C'_\ell)=\frac{1}{\sqrt \ell}\Z^{2n}\begin{bmatrix}\frac{1}{2}(H+J)\\\ell I\end{bmatrix}
=\frac{1}{\sqrt \ell}\Z^{2n+1}\begin{bmatrix}\frac{1}{2}(H+J)\\\ell(I+\allone^Te_1)\\\ell e_1\end{bmatrix}
\end{equation}
is unimodular, which is even if and only if $\ell$ is even.

By (\ref{eq:ACm}), the even sublattice of $A(C_m)$ is
\begin{equation}\label{eq:Bm}
B(C_m)=\frac{1}{\sqrt m}\Z^{2n}\begin{bmatrix}\frac{1}{2}(H^T+\allone^Th_1)\\m(I+\allone^Te_1) \end{bmatrix}.
\end{equation}
Analogously, we define a sublattice $B(C'_\ell)$ of $A(C'_\ell)$ as
\begin{equation}\label{eq:Bl}
B(C'_\ell)=\frac{1}{\sqrt \ell}\Z^{2n}\begin{bmatrix}\frac{1}{2}(H+J)\\\ell(I+\allone^Te_1)\end{bmatrix}
\subset A(C'_\ell).
\end{equation}
Then
\begin{equation}\label{eq:B}
B(C_m)=B(C_m)\frac{1}{\sqrt n}H=B(C'_\ell).
\end{equation}
Since
\[
A(C_m)\frac{1}{\sqrt n}H\ni\frac{1}{2\sqrt{\ell}}\allone\notin A(C'_\ell)
\]
by (\ref{eq:ACmH}) and (\ref{eq:ACl}), we see
$A(C_m)\frac{1}{\sqrt n}H\neq A(C'_\ell)$. Thus
there is a unique unimodular lattice containing $B(C'_\ell)$ other than $A(C'_\ell)$ and 
$A(C_m)\frac{1}{\sqrt n}H$
(see \cite{Venkov}).
Let $\Lambda(C'_\ell)$ denote this lattice and let 
\begin{equation}\label{eq:LH}
\Lambda(C_m)=\Lambda(C'_\ell)\frac{1}{\sqrt n}H^T
\end{equation}
be the unique unimodular
lattice containing $B(C_m)$
other than $A(C_m)$ and $A(C'_\ell)\frac{1}{\sqrt n}H^T$.
The relationship between the lattices introduced so far can
conveniently described by the following diagram, where
a line denotes inclusion.

\setlength{\unitlength}{.15mm}
\begin{center}
\begin{picture}(200,200)(60,-50)
\put(0,0){\line(0,1){100}}
\put(0,0){\line(1,1){100}}
\put(0,0){\line(-1,1){100}}
\put(-100,120){\makebox(0,0){$A(C_m)$}}
\put(0,120){\makebox(0,0){$\Lambda(C_m)$}}
\put(120,120){\makebox(0,0){$A(C'_\ell)\frac{1}{\sqrt{n}}H^T$}}
\put(0,-20){\makebox(0,0){$B(C_m)$}}
\put(150,50){\vector(1,0){100}}
\put(200,0){\makebox(0,0){$\frac{1}{\sqrt{n}}H$}}
\end{picture}
\begin{picture}(100,100)(-160,-50)
\put(0,0){\line(0,1){100}}
\put(0,0){\line(1,1){100}}
\put(0,0){\line(-1,1){100}}
\put(-120,120){\makebox(0,0){$A(C_m)\frac{1}{\sqrt{n}}H$}}
\put(0,120){\makebox(0,0){$\Lambda(C'_\ell)$}}
\put(100,120){\makebox(0,0){$A(C'_\ell)$}}
\put(0,-20){\makebox(0,0){$B(C'_\ell)$}}
\end{picture}
\end{center}

Since $A(C'_\ell)=B(C'_\ell)\cup (B(C'_\ell)+\sqrt{\ell}e_1)$ by (\ref{eq:ACl}) and
$A(C_m)\frac{1}{\sqrt n}H=B(C'_\ell)\cup (B(C'_\ell)+\frac{1}{2\sqrt \ell}\allone)$ by (\ref{eq:ACmH}),
we have
\begin{align}
\label{eq:L'}
\Lambda(C'_\ell)&=\frac{1}{\sqrt \ell}\Z^{2n+1}
\begin{bmatrix}\frac{1}{2}(H+J)\\\ell(I+\allone^Te_1)\\
\ell e_1+\frac{1}{2}\allone\end{bmatrix},\\
\label{eq:L}
\Lambda(C_m)&=\frac{1}{\sqrt m}\Z^{2n+1}
\begin{bmatrix}\frac{1}{2}(H^T+\allone^Th_1)\\m(I+\allone^Te_1)
\\me_1+\frac{1}{2}h_1\end{bmatrix}.
\end{align}
Observe also, by (\ref{eq:ACl}),
\begin{equation}\label{eq:Al}
A(C'_\ell)\frac{1}{\sqrt n}H^T
=\frac{1}{\sqrt{m}}\Z^{2n+1}
\begin{bmatrix}m(I+\allone^Te_1)\\ \frac{1}{2}(H^T+\allone^Th_1)\\
\frac{1}{2}h_1\end{bmatrix}.
\end{equation}
Then, 
\begin{equation}
A(C'_\ell)\frac{1}{\sqrt n}H^T\setminus B(C_m)=B(C_m)-\frac{1}{2\sqrt m}h_1
\subset\frac{1}{2\sqrt m}(1+2\Z)^n\label{eq:A-Bm}
\end{equation}
by (\ref{eq:Bm}) and (\ref{eq:Al}),
\begin{equation}
\Lambda(C_m)\setminus B(C_m)=B(C_m)-\frac{1}{\sqrt m}(me_1+\frac{1}{2}h_1)
\subset\frac{1}{2\sqrt m}(1+2\Z)^n\label{eq:L-Bm}
\end{equation}
by (\ref{eq:Bm}) and (\ref{eq:L}),
\begin{equation}
A(C_m)\frac{1}{\sqrt n}H \setminus  B(C'_\ell) =B(C'_\ell)-\frac{1}{2\sqrt \ell}\allone
\subset\frac{1}{2\sqrt \ell}(1+2\Z)^n\label{eq:A-Bl}
\end{equation}
by (\ref{eq:ACmH}) and (\ref{eq:Bl}), and
\begin{equation}
\Lambda(C'_\ell) \setminus  B(C'_\ell) =B(C'_\ell)-\frac{1}{\sqrt \ell}(\ell e_1+\frac{1}{2}\allone)
\subset\frac{1}{2\sqrt \ell}(1+2\Z)^n\label{eq:L-Bl}
\end{equation}
by (\ref{eq:Bl}) and (\ref{eq:L'}).

\begin{thm}\label{thm:14}
Let $H$ be a normalized Hadamard matrix of order $n=4\ell m$,
where $m\geq 3$ is an odd integer, $\ell\geq 2$ an integer such that $(\ell,m)=1$.
For an even integer $d<\min\{2\ell,2m\}$,
the following statements {\rm(i)--(iii)} are equivalent,
and moreover if $d\leq\max\{\ell,m+\delta_{\ell\bmod2,0}\}$, 
{\rm(iii) and (iv)} are equivalent:
\begin{enumerate}
\item $C_m$ has minimum even norm $dm$,
\item $C'_\ell$ has minimum type II norm $d\ell$,
\item $B(C_m)$ has minimum norm $d$,
\item $\Lambda(C_m)$ has minimum norm $d$.
\end{enumerate}
\end{thm}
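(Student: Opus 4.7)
First, I would prove (i) $\Leftrightarrow$ (iii) using that $B(C_m)$ is the even sublattice of $A(C_m)$. Equation~(\ref{eq:mine}) then gives that the minimum norm of $B(C_m)$ is $\min\{2m,\,(1/m)\min_u\Norme(u)\}$; since $d<2m$, this minimum equals $d$ precisely when $\min_u\Norme(u)=dm$, which is condition (i).

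For (ii) $\Leftrightarrow$ (iii), the key ingredient is (\ref{eq:B}), which combined with the orthogonality of $\frac{1}{\sqrt{n}}H$ yields an isometry $B(C_m)\cong B(C'_\ell)$; in particular the two lattices share the same minimum norm. From (\ref{eq:Bl}), the lattice $B(C'_\ell)$ coincides with the sublattice of $A(C'_\ell)$ on which $\frac{1}{\sqrt{\ell}}x\cdot\allone$ is even, so (\ref{eq:minII}) applies with $\NormII$ in place of $\Norme$ and yields the analogous conclusion using $d<2\ell$.

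For (iii) $\Leftrightarrow$ (iv), the plan is to bound from below by $\max\{\ell,\,m+\delta_{\ell\bmod 2, 0}\}$ the minimum norm of the single nontrivial coset $\Lambda(C_m)\setminus B(C_m)$. Once this is done, the equivalence follows immediately: $B(C_m)$ has index $2$ in $\Lambda(C_m)$, so the minimum norms of the two lattices coincide as long as the common value $d$ does not exceed the minimum norm of the nontrivial coset. Two lower bounds are available. By (\ref{eq:L-Bm}), every vector in the coset has entries in $\frac{1}{2\sqrt{m}}(1+2\Z)$, giving squared norm at least $n/(4m)=\ell$. Applying the isometry $\Lambda(C_m)=\Lambda(C'_\ell)\frac{1}{\sqrt{n}}H^T$ and (\ref{eq:L-Bl}), the coset is isometric to $\Lambda(C'_\ell)\setminus B(C'_\ell)$, whose vectors have entries in $\frac{1}{2\sqrt{\ell}}(1+2\Z)$ and hence squared norm at least $n/(4\ell)=m$.

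The main obstacle will be promoting the $m$-bound to $m+1$ when $\ell$ is even. In that case $C'_\ell$ is type II by Lemma~\ref{lem:sd2} and so $B(C'_\ell)$ is even, but the coset representative $-(1/\sqrt{\ell})(\ell e_1+\frac{1}{2}\allone)$ from (\ref{eq:L-Bl}) has squared norm $\ell+m+1$, so one must verify by an explicit inner-product computation---using $(1/\sqrt{\ell})y\cdot\allone\in 2\Z$ and $\sqrt{\ell}y_1\in\Z$ for all $y\in B(C'_\ell)$---that every vector in $\Lambda(C'_\ell)\setminus B(C'_\ell)$ has even squared norm, i.e.\ that $\Lambda(C'_\ell)$ is itself an even unimodular lattice. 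Since $m$ is odd, this parity upgrade yields the required $+1$ and completes the joint lower bound, finishing the equivalence.
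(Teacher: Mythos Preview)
Your proposal is correct and follows essentially the same route as the paper: the equivalence of (i)--(iii) is obtained exactly as you describe, via (\ref{eq:mine}), (\ref{eq:minII}), and the isometry (\ref{eq:B}), and the equivalence of (iii) and (iv) via the coset bounds coming from (\ref{eq:L-Bm}) and (\ref{eq:L-Bl}). The only cosmetic difference is in the parity upgrade: the paper simply asserts that $\Lambda(C_m)$ is even when $\ell$ is even (this follows from the neighbor picture, since $A(C'_\ell)\frac{1}{\sqrt{n}}H^T$ is already known to be even), whereas you verify it by a direct norm computation on the nontrivial coset; both arguments are valid and yield the same conclusion.
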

\begin{proof}
The relations between $A(C_m)$, $A(C'_\ell)$
and $B(C_m)$, $B(C'_\ell)$ 
are given as
\begin{align*}
B(C_m)&=\{x\in A(C_m)\mid\|x\|^2\equiv 0\pmod 2\},\\
B(C'_\ell)&=\{x\in A(C'_\ell)\mid\frac{1}{\sqrt \ell}x\cdot\allone\equiv 0\pmod 2\}.
\end{align*}
Thus, by (\ref{eq:mine}), (\ref{eq:minII}) and (\ref{eq:B}),
we have
\begin{align}
\label{eq:minB}
\min B(C_m)&=\min\{2m,\frac{1}{m}\min_{u\in C_m\setminus\{0\}}\Norme(u)\}\nonumber\\
=\min B(C'_\ell)&=\min\{2\ell,\frac{1}{\ell}\min_{u\in C'_\ell\setminus\{0\}}\NormII(u)\}.
\end{align}
Since $d<\min\{2\ell,2m\}$, 
the equivalence of (i)--(iii) is established.

We have $\min(\Lambda(C_m)\setminus B(C_m))\geq \ell$ by (\ref{eq:L-Bm})
and $\min(\Lambda(C'_\ell)\setminus B(C'_\ell))\geq m$ by (\ref{eq:L-Bl}),
and thus $\min(\Lambda(C_m)\setminus B(C_m))\geq\max\{\ell,m\}$.
If $\ell\equiv0\pmod2$, $\Lambda(C_m)$ is an even lattice, so
$\min(\Lambda(C_m)\setminus B(C_m))\geq\max\{\ell,m+1\}$.
This shows the equivalence of (iii) and (iv).
\end{proof}

We have the following
as the complement of above theorem.
\begin{cor}\label{cor:15}
Let $H$ be a normalized Hadamard matrix of order $n=4\ell m$,
where $m\geq 3$ is an odd integer, $\ell\geq 2$ an integer such that $(\ell,m)=1$.
Let $d=\min\{2\ell,2m\}$.
Then the following statements {\rm(i)--(iii)} are equivalent,
and moreover if $d\leq\max\{\ell,m+\delta_{\ell\bmod2,0}\}$, 
{\rm(iii)} and {\rm(iv)} are equivalent:
\begin{enumerate}
\item $C_m$ has minimum even norm at least (exactly if $d=2\ell$) $dm$,
\item $C'_\ell$ has minimum type II norm at least (exactly if $d=2m$) $d\ell$,
\item $B(C_m)$ has minimum norm $d$,
\item $\Lambda(C_m)$ has minimum norm $d$.
\end{enumerate}
\end{cor}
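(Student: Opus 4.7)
The plan is to parallel the proof of Theorem~\ref{thm:14}, applying the key identity (\ref{eq:minB}) at the boundary value $d=\min\{2\ell,2m\}$. Since $m\geq3$ is odd, $\ell\geq2$, and $(\ell,m)=1$, we have $\ell\neq m$, so exactly one of $d=2\ell<2m$ or $d=2m<2\ell$ occurs; by the symmetric roles of $(C_m,m,\Norme)$ and $(C'_\ell,\ell,\NormII)$ in (\ref{eq:minB}), it suffices to treat $d=2\ell<2m$, the other case being symmetric.

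For the equivalence of {\rm(i)}, {\rm(ii)}, {\rm(iii)}, I would start from the identity
\[
\min B(C_m)=\min\Bigl\{2m,\tfrac{1}{m}\min_{u\in C_m\setminus\{0\}}\Norme(u)\Bigr\}
=\min\Bigl\{2\ell,\tfrac{1}{\ell}\min_{u\in C'_\ell\setminus\{0\}}\NormII(u)\Bigr\}
\]
from the proof of Theorem~\ref{thm:14}. With $d=2\ell<2m$, the rightmost expression equals $d$ iff $\min_{u}\NormII(u)\geq 2\ell^2=d\ell$, giving {\rm(ii)} in its ``at least'' form, while the middle expression equals $d=2\ell$ iff $\frac{1}{m}\min_{u}\Norme(u)=2\ell$, i.e.\ $\min_{u}\Norme(u)=dm$ \emph{exactly}, giving {\rm(i)} in its ``exactly'' form. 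In the symmetric case $d=2m$ the word ``exactly'' attaches to {\rm(ii)} instead, matching the statement.

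For the equivalence of {\rm(iii)} and {\rm(iv)}, I would reuse the norm bound from the proof of Theorem~\ref{thm:14}. Combining (\ref{eq:L-Bm}) with (\ref{eq:L-Bl}) transported by the isometry $x\mapsto x\frac{1}{\sqrt n}H^T$ (which identifies $\Lambda(C'_\ell)\setminus B(C'_\ell)$ with $\Lambda(C_m)\setminus B(C_m)$ via (\ref{eq:LH}) and (\ref{eq:B})), one has $\min(\Lambda(C_m)\setminus B(C_m))\geq\max\{\ell,m\}$, sharpened to $\max\{\ell,m+1\}$ when $\ell$ is even because $\Lambda(C_m)$ is then an even lattice. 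Under the hypothesis $d\leq\max\{\ell,m+\delta_{\ell\bmod2,0}\}$, no vector in the nontrivial coset $\Lambda(C_m)\setminus B(C_m)$ contributes a norm smaller than $d$, so $\min\Lambda(C_m)=\min B(C_m)$, yielding {\rm(iii)} $\Leftrightarrow$ {\rm(iv)}.

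I do not anticipate a genuine obstacle, as the corollary merely pushes Theorem~\ref{thm:14} to the boundary $d=\min\{2\ell,2m\}$. The one point requiring care is bookkeeping which side of (\ref{eq:minB}) realizes $d$ as its first argument, since on that side the corresponding conclusion in {\rm(i)} or {\rm(ii)} relaxes to an inequality, while on the other it remains an equality.
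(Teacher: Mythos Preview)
Your proposal is correct and follows essentially the same route as the paper: both rely on the identity~(\ref{eq:minB}) at the boundary value $d=\min\{2\ell,2m\}$ for (i)--(iii), and on the coset bound $\min(\Lambda(C_m)\setminus B(C_m))\geq\max\{\ell,m+\delta_{\ell\bmod2,0}\}$ for (iii)$\Leftrightarrow$(iv). The paper phrases the first part contrapositively (invoking Theorem~\ref{thm:14} for values below $d$ and then negating), whereas you evaluate the two sides of~(\ref{eq:minB}) directly; the content is the same. One small point you leave implicit: your step ``so $\min\Lambda(C_m)=\min B(C_m)$'' needs $\min B(C_m)\leq d$, which the paper states explicitly (``Since $\min B(C_m)$ is at most $d$ by~(\ref{eq:minB})'') and which you use tacitly.
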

\begin{proof}
By Theorem~\ref{thm:14}, we see the equivalence of (i)$^\prime$ $C_m$ has minimum even norm less than $dm$,
(ii)$^\prime$ $C'_\ell$ has minimum type II norm less than $d\ell$, and
(iii)$^\prime$ $B(C_m)$ has minimum norm less than $d$.
Since $\min B(C_m)$ is at most $d$ by (\ref{eq:minB}),
(i)--(iii) are the negatives of (i)$^\prime$--(iii)$^\prime$ respectively.
Exactness in (i) and (ii) follows from
\[
\min B(C_m)=
\begin{cases}
\displaystyle
\frac{1}{m}\min_{u\in C_m\setminus\{0\}}\Norme(u) 
&\text{if $d=2\ell$,}\\
\displaystyle
\frac{1}{\ell}\min_{u\in C'_\ell\setminus\{0\}}\NormII(u) 
&\text{if $d=2m$.}
\end{cases}
\]
The equivalence of (iii) and (iv) follows also from
Theorem~\ref{thm:14} provided
$d\leq\max\{\ell,m+\delta_{\ell\bmod2,0}\}$.
\end{proof}
Note that the minimum norm of $C_m$ and $C'_\ell$ are both at most $n/2=2\ell m$,
given by the sum of two distinct rows of $H^T$ for $C_m$,
and by any row except the first one of $B$ for $C'_\ell$.

Setting $(\ell,m,n)=(2,3,24)$ in Corollary~\ref{cor:15}, we have another proof of Corollary~\ref{cor:1}.
\begin{cor}
Let $H$ be a normalized Hadamard matrix of order $24$.
The following statements are equivalent:
\begin{enumerate}
\item $C_3$ has minimum weight $9$,
\item $C'_2$ has minimum weight $8$,
\item $\Lambda(C_3)$ has minimum norm $4$ (hence is isometric 
to the Leech lattice).
\end{enumerate}
\end{cor}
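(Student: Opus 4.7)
The plan is to apply Corollary~\ref{cor:15} with $(\ell, m, n) = (2, 3, 24)$. Here $d = \min\{2\ell, 2m\} = 2\ell = 4$, and the hypothesis $d \leq \max\{\ell, m + \delta_{\ell \bmod 2, 0}\} = \max\{2, 4\} = 4$ is satisfied, so the full equivalence of conditions (i)--(iv) of Corollary~\ref{cor:15} is in force. It then suffices to match (i) with (1), (ii) with (2), and (iv) with (3), and to recognize the lattice in (3) as the Leech lattice.

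For the ternary side, a nonzero codeword $u \in C_3$ has $\max_i \Lee(u_i) = 1$, so $\Norm(u) = \wt(u)$ and (\ref{eq:n}) reduces to $\{\Normo(u), \Norme(u)\} = \{\wt(u), \wt(u) + 3\}$. Since $d = 2\ell$, condition (i) of Corollary~\ref{cor:15} asserts that the minimum even norm of $C_3$ is \emph{exactly} $dm = 12$; combined with Lemma~\ref{lem:norm_m}(ii), which already rules out weight $3$, this is equivalent to the absence of a codeword of weight $6$, i.e., to minimum weight $9$. For the binary side, $C'_2$ is a doubly even self-dual $[24,12]$ code by Lemma~\ref{lem:sd2}, so the extremality bound forces its minimum weight to be $4$ or $8$; because type~I and type~II norms both reduce to Hamming weight over $\F_2$, condition (ii)---minimum type~II norm at least $d\ell = 8$---is the same as minimum weight exactly $8$.

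Condition (iv) literally states that $\Lambda(C_3)$ has minimum norm $4$, matching (3). Because $\ell = 2$ is even, $A(C'_2)$ is even unimodular (Lemma~\ref{lem:sd2}), and $\Lambda(C_3) = \Lambda(C'_2)\frac{1}{\sqrt{n}}H^T$ (see (\ref{eq:LH})) is obtained from the $2$-neighbor $\Lambda(C'_2)$ of $A(C'_2)$ by an isometry, so $\Lambda(C_3)$ is even unimodular of rank $24$. The only step beyond unpacking Corollary~\ref{cor:15} is the classical fact that the Leech lattice is the unique even unimodular lattice of rank $24$ with minimum norm $4$; this is the main (and only non-bookkeeping) ingredient in the argument, and I would cite the Niemeier classification for it.
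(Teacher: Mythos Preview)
Your approach is essentially the paper's own: specialize Corollary~\ref{cor:15} to $(\ell,m,n)=(2,3,24)$, check that $d=4\leq\max\{\ell,m+1\}=4$ so that (i)--(iv) are all equivalent, and then translate each condition into the language of the statement. The paper's proof is terser but follows exactly this route, invoking the extremality bound to pass from ``minimum even norm $12$'' to ``minimum weight $9$'' and citing \cite[chap.~12]{SPLAG} for the Leech identification.

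One point to tighten: your justification that $\Lambda(C_3)$ is even unimodular is not quite right. You argue that $\Lambda(C'_2)$ is a $2$-neighbor of the even lattice $A(C'_2)$ and conclude that $\Lambda(C_3)\cong\Lambda(C'_2)$ is therefore even, but a $2$-neighbor of an even unimodular lattice need not be even (indeed, $A(C_3)\frac{1}{\sqrt{n}}H$ is another overlattice of $B(C'_2)$ and it is odd). The correct reason, already recorded in the proof of Theorem~\ref{thm:14}, is that $\Lambda(C_m)\setminus B(C_m)\subset\frac{1}{2\sqrt{m}}(1+2\Z)^n$ by (\ref{eq:L-Bm}), and a short parity computation using $n=4\ell m$ with $\ell$ even shows every such vector has even norm. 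With that fix your argument is complete; the Niemeier classification (or the characterization in \cite[chap.~12]{SPLAG} that the paper cites) then identifies the lattice as Leech.
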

\begin{proof}
Since $C_3$ has minimum even norm $12$ if and only if
it has minimum weight $9$ by the extremality condition,
the result follows from Corollary~\ref{cor:15}.
We note that when (iii) occurs, $\Lambda(C_3)$
is isometric to the Leech lattice by \cite[chap.~12]{SPLAG}.
\end{proof}

Let $k$ be even and let $H$ be a skew Hadamard matrix of order $n=4k-4$ with all diagonal entries $-1$.
In \cite{Mc}, McKay gives a even unimodular lattice
$$L=\frac{1}{\sqrt k}\Z^{2n}\begin{bmatrix}I_n&H-I_n\\O&kI\end{bmatrix},$$
and asserts that its minimum norm is $4$ for $k\geq 4$ (See also \cite{Chap}).
Without loss of generality,
we may assume the first row of $H$ to be $-\allone$.
Then
$$\tilde{H}=\begin{bmatrix}-H&H^TD\\H&H^TD\end{bmatrix},$$ where $D=\diag(-1,1,1,\dots,1)$,
is a normalized Hadamard matrix. We describe an isometry from $L$ to $\Lambda(C'_2)$,
where $C'_2$ is the binary doubly even self-dual code obtained from the binary Hadamard
matrix associated to $\tilde{H}$. Set
$$U=\frac{1}{2\sqrt k}\begin{bmatrix}I_n&-H+I_n\\H^T-I&I_n\end{bmatrix}.$$
Since $U$ is an orthogonal matrix and
$$LU=\frac{1}{2}\Z^{2n}\begin{bmatrix}4I_n&O\\H^T-I_n&I_n\end{bmatrix},$$
$L$ is isometric to the lattice obtained from the $\Z_4$-code
with generator matrix $\begin{bmatrix}H^T-I_n&I_n\end{bmatrix}$.
Furthermore, set
$$V=\frac{1}{\sqrt 2}\begin{bmatrix}I_n&D\\-I_n&D\end{bmatrix}\text{, and}$$
$$M=\frac{1}{2\sqrt 2}\begin{bmatrix}4I_n&4D\\H^T-2I_n&H^TD\end{bmatrix}.$$
Then
\begin{align*}
\Lambda(C'_2)&=\frac{1}{\sqrt 2}\Z^{2n+1}
\begin{bmatrix}\frac{1}{2}(\tilde{H}+J)\\2(I+\allone^Te_1)\\
2e_1+\frac{1}{2}\allone\end{bmatrix}\\
&=\Z^{2n+1}\begin{bmatrix}\frac{1}{2}(H^T+J_n)-\allone^Te_1&-I_n-\allone^Te_1\\
\frac{1}{2}(H^T+J_n)-\allone^Te_1+\frac{n}{4}I_n&-I_n-\allone^Te_1-H\\
\frac{1}{2}(H^T+J_n)-\allone^Te_1&-2(I_n+\allone^Te_1)\\
\frac{1}{2}(J_nD-DH^T)+D&2(I_n+\allone^Te_1)D\\
\allone-2e_1 & -3e_1\end{bmatrix}M\\
&\subset\Z^{2n}M=\frac{1}{2}\Z^{2n}\begin{bmatrix}4I_n&O\\H^T-I_n&I_n\end{bmatrix}V=LUV.
\end{align*}
Since $L$ and $\Lambda(C'_2)$ are both unimodular and $V$ is an orthogonal matrix,
we conclude that $L$ is isometric to $\Lambda(C'_2)$.

\begin{cor}\label{cor:48-1}
Let $H$ be a normalized Hadamard matrix of order $48$.
The following statements are equivalent:
\begin{enumerate}
\item $C_3$ has minimum weight $15$,
\item $C'_4$ has minimum type II norm $24$,
\item $B(C_3)$ has minimum norm $6$.
\end{enumerate}
\end{cor}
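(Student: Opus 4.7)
The plan is to deduce the three-way equivalence from two previously-established results: Corollary~\ref{cor:3-4}(iii) for (i)$\Leftrightarrow$(ii), and Corollary~\ref{cor:15} for (ii)$\Leftrightarrow$(iii).

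Corollary~\ref{cor:3-4}(iii) asserts verbatim that $C_3$ is an extremal self-dual $[48,24,15]$ code if and only if $C'_4$ has minimum type II norm $24$; this delivers (i)$\Leftrightarrow$(ii) with no further work.

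For (ii)$\Leftrightarrow$(iii), I would specialize Corollary~\ref{cor:15} to $(\ell,m,n)=(4,3,48)$ with $d=\min\{2\ell,2m\}=2m=6$. Since $d$ equals $2m$ rather than $2\ell$, the parenthetical ``exactly if $d=2m$'' in statement (ii) of that corollary fires, and its three equivalent conditions (i)--(iii) become: $C_3$ has minimum even norm at least $dm=18$; $C'_4$ has minimum type II norm exactly $d\ell=24$; and $B(C_3)$ has minimum norm $6$. The second and third of these are verbatim statements (ii) and (iii) of the present corollary, closing the remaining equivalence.

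There is essentially no obstacle, since every ingredient has been arranged in the previous sections; the argument is a matter of recognizing which specialization of Corollary~\ref{cor:15} to quote. The only mild subtlety is bookkeeping: confirming that $d=2m<2\ell$ selects the ``exactly $24$'' branch of statement (ii) rather than the weaker ``at least $24$''. As a sanity check, one could alternatively bypass Corollary~\ref{cor:3-4}(iii) and reprove (i)$\Leftrightarrow$(i$'$), where (i$'$) is the statement ``$C_3$ has minimum even norm at least $18$'': the minimum weight of a ternary self-dual $[48,24]$ code lies in $\{6,9,12,15\}$ by divisibility together with Lemma~\ref{lem:norm_m}(ii) and the extremality bound, and formula~(\ref{eq:n}) assigns to these the respective minimum even norms $6,12,12,18$, so (i$'$) holds iff the minimum weight equals $15$.
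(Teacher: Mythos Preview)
Your proposal is correct. The paper's own proof is even terser: it simply specializes Corollary~\ref{cor:15} to $(\ell,m)=(4,3)$, obtaining the equivalence of ``$C_3$ has minimum even norm at least $18$'', ``$C'_4$ has minimum type II norm exactly $24$'', and ``$B(C_3)$ has minimum norm $6$'', and then remarks that the first of these is equivalent to $C_3$ having minimum weight $15$ because the minimum weight is at most $15$ by the extremality bound. In other words, the paper takes exactly the route you describe as your ``sanity check'', without detouring through Corollary~\ref{cor:3-4}(iii). Your primary approach, invoking Corollary~\ref{cor:3-4}(iii) for (i)$\Leftrightarrow$(ii) and Corollary~\ref{cor:15} for (ii)$\Leftrightarrow$(iii), is also valid but slightly less economical since Corollary~\ref{cor:15} already delivers all three equivalences once one identifies ``minimum even norm $\geq 18$'' with ``minimum weight $15$''.
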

\begin{proof}
Since the minimum weight of $C_3$ is at most $15$ by the extremality condition,
the result follows by setting $(\ell,m)=(4,3)$ in
Corollary~\ref{cor:15}.
\end{proof}

As a matter of fact, we have stronger result by the following argument.
\begin{lem}\label{lem:num}
Let $H$ be a normalized Hadamard matrix of order $n=4\ell m$,
where $m\geq 3$ is an odd integer, $\ell\geq 2$ an integer such that $(\ell,m)=1$,
and assume $H^T$ is also normalized.
Then the number of norm $\ell$ vectors of $A(C'_\ell)\setminus B(C'_\ell)$
(resp. $\Lambda(C_m)\setminus B(C_m)$) is equal to the number of codewords of $C_m$
of even (resp. odd) weight whose nonzero entries are all equal to $1$.
\end{lem}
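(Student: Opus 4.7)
The plan is to treat both statements uniformly by reducing each to an enumeration of norm-$\ell$ elements in a specific coset of $B(C_m)$ lying inside $\frac{1}{2\sqrt m}(1+2\Z)^n$, and then to rigidify such elements by a dimension count.

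First I would use that $H^T$ is normalized, so $h_1=\allone$, in which case (\ref{eq:L-Bm}) reads $\Lambda(C_m)\setminus B(C_m)=B(C_m)-\frac{1}{\sqrt m}(me_1+\tfrac12\allone)$. For the other quantity I would apply the isometry $\frac{1}{\sqrt n}H^T$ to $A(C'_\ell)\setminus B(C'_\ell)$: by (\ref{eq:B}) this sends $B(C'_\ell)$ onto $B(C_m)$, and (\ref{eq:A-Bm}) identifies the image with $B(C_m)-\frac{1}{2\sqrt m}\allone$. Both cosets sit inside $\frac{1}{2\sqrt m}(1+2\Z)^n$, where every coordinate has squared absolute value at least $\frac{1}{4m}$; since there are exactly $n=4\ell m$ coordinates, a norm-$\ell$ element is forced to have every coordinate equal to $\pm\frac{1}{2\sqrt m}$, hence to be of the form $\frac{1}{2\sqrt m}\epsilon$ for some $\epsilon\in\{\pm1\}^n$.

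Next I would introduce the bijection $\{\pm1\}^n\leftrightarrow\{0,1\}^n$ given by $\epsilon\mapsto f=\tfrac12(\epsilon+\allone)$. The condition $\frac{1}{2\sqrt m}\epsilon\in B(C_m)-\frac{1}{2\sqrt m}\allone$ unwinds to $\frac{1}{\sqrt m}f\in B(C_m)$, and $\frac{1}{2\sqrt m}\epsilon\in B(C_m)-\frac{1}{\sqrt m}(me_1+\tfrac12\allone)$ unwinds to $\frac{1}{\sqrt m}(f+me_1)\in B(C_m)$. In both cases the requirement of lying in $A(C_m)$ reduces to $f\bmod m\in C_m$, which by integrality of $A(C_m)$ forces $m\mid\wt(f)$. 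The even-norm condition distinguishing $B(C_m)$ from $A(C_m)\setminus B(C_m)$ then becomes $\wt(f)/m\equiv 0\pmod 2$ in the first case and $\wt(f)/m+m\equiv 0\pmod 2$ in the second; since $m$ is odd, these are precisely the conditions that $\wt(f)$ is even, respectively odd, and the bijection $\epsilon\leftrightarrow f$ delivers both counts at once.

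The only delicate point is the parity bookkeeping: I need the oddness of $m$ to convert between the parities of $\wt(f)$ and $\wt(f)/m$, and I must be sure that the two cosets of $B(C_m)$ arising above are genuinely distinct. The latter holds because they differ by $\sqrt m\,e_1$, whose norm $m$ is odd and so does not lie in $B(C_m)$; it is exactly this shift by $me_1$ that produces complementary parities in the even and odd cases, and everything else is a transparent consequence of the coset descriptions already recorded as (\ref{eq:L-Bm}) and (\ref{eq:A-Bm}).
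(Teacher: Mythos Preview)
Your proposal is correct and follows essentially the same approach as the paper's own proof: both transport $A(C'_\ell)\setminus B(C'_\ell)$ to $B(C_m)-\frac{1}{2\sqrt m}\allone$ via the isometry $\frac{1}{\sqrt n}H^T$, use the minimum-norm argument in $\frac{1}{2\sqrt m}(1+2\Z)^n$ to force coordinates $\pm\frac{1}{2\sqrt m}$, and then biject with $\{0,1\}$-codewords of $C_m$ via $\epsilon\mapsto\frac12(\epsilon+\allone)$ (the paper writes this as $\rho(v)=(\sqrt m\,v+\tfrac12\allone)\bmod m$), with the parity of $\wt(f)$ distinguishing the two cosets. The only cosmetic difference is that the paper packages both cosets at once inside $L=A(C_m)-\frac{1}{2\sqrt m}\allone$ before splitting by parity, whereas you handle each coset separately; your final paragraph checking that the two cosets are genuinely distinct is a nice touch that the paper leaves implicit.
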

\begin{proof}
Set
\begin{align*}
L&=A(C_m)-\frac{1}{2\sqrt{m}}\allone,\\
X&=\{x\in L\mid \|x\|^2=\ell\}.
\end{align*}
Then every element of $X$
is of the form $v=\frac{1}{2\sqrt{m}}(\pm1,\dots,\pm1)$,
and hence the map
\begin{align*}
\rho:X&\rightarrow C_m\\
v&\mapsto \left(\sqrt{m}v+\frac{1}{2}\allone\right)\bmod m,
\end{align*}
gives a one-to-one correspondence between $X$ and
the set of codewords of $C_m$ whose nonzero entries are all equal to $1$.
For $v\in X$, we have
$\wt(\rho(v))=\Norm(\rho(v))=m\|v+\frac{1}{2\sqrt m}\allone\|^2$.
Thus $\wt(\rho(v))$ is even if and only if
$v+\frac{1}{2\sqrt m}\allone \in B(C_m)$.
Since $H^T$ is normalized, 
(\ref{eq:B}) and (\ref{eq:A-Bm}) imply
$(A(C'_\ell)\setminus B(C'_\ell))\frac{1}{\sqrt n}H^T=
B(C_m)-\frac{1}{2\sqrt{m}}\allone$,
and hence the set of norm $\ell$ vectors of
$A(C'_\ell)\setminus B(C'_\ell)$ is
\[
\left\{\frac{1}{\sqrt n}vH\mid v\in X,\;
\wt(\rho(v))\text{ even}\right\}.
\]
Similarly, since
\[
L=(B(C_m)-\frac{1}{2\sqrt m}\allone)\cup
(B(C_m)-\frac{1}{\sqrt m}(me_1+\frac{1}{2}\allone))
\quad\text{(disjoint),}
\]
and by (\ref{eq:L-Bm}),
the set of norm $\ell$ vectors of 
$\Lambda(C_m)\setminus B(C_m)$
is $\{v\in X\!\mid\! \wt(\rho(v))\!\text{ odd}\}$.
\end{proof}


A ternary self-dual code of length $n$ has minimum weight 
at most $3\lfloor n/12\rfloor +3$ (see \cite{MSic}),
thus at most $15$ for $n=48$.
A type II self-dual code over $\Z/4\Z$ of length $n$ has
minimum Euclidean norm
at most $8\lfloor n/24\rfloor+8$ (see \cite[Corollary 13]{BSBM}),
thus at most $24$ for $n=48$. 
An $n$-dimensional even unimodular lattice has minimum norm at most
$2\lfloor n/24\rfloor+2$, thus at most $6$ for $n=48$.
A code or a lattice achieving the upper bound is called extremal.

By \cite[Proposition 3.3]{Gab}, the complete weight enumerator of
any extremal $[48,24,15]$ ternary self-dual code with all-one vector
is uniquely determined to
\begin{equation}\label{eq:cwe}
W(x,y,z)=\sum x^{48}+94\sum x^{24}y^{24}+x^3y^3z^3(\dots),
\end{equation}
given in \cite[Table 1]{Koch},
where the sums are to be taken over the cyclic permutations of $x,y,z$.
Now we have the following sharpening of Corollary~\ref{cor:3-4}(iii)
and Corollary~\ref{cor:48-1}.

\begin{thm}
Let $H$ be a normalized Hadamard matrix of order $48$, and let
$B$ be the binary Hadamard matrix associated to $H$.
Let $C_3$ be the ternary code generated by the rows of $H^T$, and let
$C'_4$ be the code over $\Z/4\Z$ generated by the rows of $B$.
The following statements are equivalent:
\begin{enumerate}
\item $C_3$ is extremal,
\item $C'_4$ is extremal,
\item $\Lambda(C_3)$ is extremal.
\end{enumerate}
\end{thm}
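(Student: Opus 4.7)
The plan is to establish $(i)\Leftrightarrow(iii)$ and $(i)\Leftrightarrow(ii)$ using Corollary~\ref{cor:48-1}, Lemma~\ref{lem:num}, Theorem~\ref{thm:C2Cp'}, and the complete weight enumerator~(\ref{eq:cwe}) of extremal ternary $[48,24,15]$ codes. The lattices $\Lambda(C_3)$ and $\Lambda(C'_4)$ are isometric $48$-dimensional even unimodular lattices by~(\ref{eq:LH}). By~(\ref{eq:L-Bm}), $\Lambda(C_3)\setminus B(C_3)\subset\tfrac{1}{2\sqrt 3}(1+2\mathbb Z)^{48}$ has minimum norm at least $4$, so since $\Lambda(C_3)$ is even, statement~$(iii)$ reduces to $\min B(C_3)\geq 6$ together with the absence of norm-$4$ vectors in $\Lambda(C_3)\setminus B(C_3)$. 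By Hadamard equivalence (e.g.\ multiplying rows of $H$ by the signs of its first column) we may assume $H^T$ is also normalized, so that Lemma~\ref{lem:num} applies.

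For $(i)\Rightarrow(iii)$, Corollary~\ref{cor:48-1} gives $\min B(C_3)=6$, and~(\ref{eq:cwe}) shows that every codeword of $C_3$ with entries in $\{0,1\}$ has weight $0$, $24$, or $48$, all even; Lemma~\ref{lem:num} then forbids norm-$4$ vectors in $\Lambda(C_3)\setminus B(C_3)$, so $\min\Lambda(C_3)=6$. Conversely $(iii)\Rightarrow(i)$ follows from $\min B(C_3)\geq\min\Lambda(C_3)=6$, the general bound $\min B(C_3)\leq 2m=6$ from~(\ref{eq:minB}), and Corollary~\ref{cor:48-1}. For $(ii)\Rightarrow(i)$, extremality of $C'_4$ yields $\min\NormII(C'_4)\geq\min\Norm(C'_4)=24$, while the isometry $B(C'_4)\cong B(C_3)$ via~(\ref{eq:B}) combined with~(\ref{eq:minII}) and $\min B(C_3)\leq 6$ gives $\min\NormII(C'_4)\leq 24$; hence $\min\NormII(C'_4)=24$ and Corollary~\ref{cor:48-1} yields~$(i)$.

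The crux is $(i)\Rightarrow(ii)$: I need to rule out a nonzero $u\in C'_4$ with $\Norm(u)<24$, so $\Norm(u)\in\{8,16\}$ since $C'_4$ is type II, and Corollary~\ref{cor:48-1} already excludes $\NormII(u)\in\{8,16\}$. The first remaining case, $\Norm(u)=8$ with $\NormI(u)=8$ and $\NormII(u)=16$, is disposed of by Theorem~\ref{thm:C2Cp'}(ii) applied with $d=16$ and $k=-8$, which produces a codeword of $C_3$ of even norm at most $12$, contradicting the minimum even norm $18$ of an extremal $C_3$. The second case, $\Norm(u)=16$ with $\NormI(u)=16$ and $\NormII(u)=24$, is the main obstacle: the norm-minimizing representative $v\in\mathbb Z^{48}\setminus 4\mathbb Z^{48}$ satisfies $\|v\|^2=16$ and $v\cdot\mathbf 1\equiv 4\pmod 8$, so $\tfrac{1}{2}v\in A(C'_4)\setminus B(C'_4)$ is a norm-$4$ vector distinct from each $\pm 2e_i$. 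But by Lemma~\ref{lem:num} applied to the extremal $C_3$ (whose $96$ codewords with $\{0,1\}$-entries all have even weight by~(\ref{eq:cwe})), $A(C'_4)\setminus B(C'_4)$ contains exactly $96$ norm-$4$ vectors, which are exhausted by the $\pm 2e_i$ for $1\leq i\leq 48$ arising from the zero codeword of $C'_4$; this contradiction completes the proof.
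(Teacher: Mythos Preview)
Your proof is correct and follows essentially the same route as the paper: the reductions $(iii)\Rightarrow(i)$ and $(ii)\Rightarrow(i)$ via Corollary~\ref{cor:48-1}, and the forward directions $(i)\Rightarrow(ii)$, $(i)\Rightarrow(iii)$ via the count of norm-$4$ vectors in $A(C'_4)\setminus B(C'_4)$ and $\Lambda(C_3)\setminus B(C_3)$ using Lemma~\ref{lem:num} together with the complete weight enumerator~(\ref{eq:cwe}), are exactly the paper's arguments. One small redundancy: your ``first remaining case'' $\Norm(u)=8$, $\NormII(u)=16$ is already eliminated by the sentence immediately preceding it (you note that Corollary~\ref{cor:48-1} excludes $\NormII(u)\in\{8,16\}$), so the appeal to Theorem~\ref{thm:C2Cp'}(ii) there is unnecessary---indeed $\Norm(u)=8$ forces $\NormII(u)\in\{8,16\}$ and is thus ruled out directly, leaving only your second case as the genuine obstacle.
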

\begin{proof}
Since any row of $B$ except the 
first one gives a codeword of $C'_4$ with type II norm $24$,
(ii) implies that $C'_4$ has minimum type II norm $24$. 
Thus (ii)$\Rightarrow$(i) follows from Corollary~\ref{cor:48-1}.
If $\Lambda(C_3)$ has minimum norm $6$, then by (\ref{eq:minB}),
$B(C_3)$ has minimum norm $6$.
Thus (iii)$\Rightarrow$(i) follows also
from Corollary~\ref{cor:48-1}.

To prove (i)$\Rightarrow$(ii), suppose that
$C_3$ has minimum weight $15$. 
Let $D=\diag(h_1)$ where $h_1$ is the first row of $H^T$.
Then $H'=DH$ is a normalized Hadamard matrix such that ${H'}^T$ is also
normalized.
The rows of $\frac{1}{2}(H'+J)$ generate the $\Z/4\Z$ code $C'_4$ since
$\frac{1}{2}D(H'+J)+\frac{1}{2}(\allone-h_1)^T\allone=B$, 
while the rows of ${H'}^T$ generate the ternary code $C_3D$
which is equivalent to $C_3$,
and (\ref{eq:LH}) implies $\Lambda(C_3D)\cong\Lambda(C_3)$.
Thus, we may assume from the beginning that both $H$ and $H^T$ are
normalized. Then Lemma~\ref{lem:num} implies 
\begin{equation}\label{eq:602}
\left|\{v\in A(C'_4)\setminus B(C'_4)\mid \|v\|^2=4\}\right|\\
=\left|\{x\in C_3\cap \{0,1\}^{48}\mid \wt(x)\text{ even}\}\right|.
\end{equation}
Note that $B(C'_4)\cong B(C_3)$
has no vector of norm $2$ or $4$
by (\ref{eq:B}) and Corollary~\ref{cor:48-1}, 
and $A(C'_4)\setminus B(C'_4)$ has no vector of norm $2$
by (\ref{eq:B}) and (\ref{eq:A-Bm}). 
Thus, the left-hand side of
(\ref{eq:602}) coincides with the number of norm $4$ vectors in $A(C'_4)$.
On the other hand, as $H^T$ is normalized, $C_3$ contains the all-one vector,
hence the right-hand side of (\ref{eq:602}) equals $1+94+1=96$ by (\ref{eq:cwe}).
It follows that 
the $96$ norm $4$ vectors of $A(C'_4)$ are $\pm2e_i$ $(i=1,\dots,48)$,
and thus $C'_4$ has no codeword of Euclidean norm $16$.
Therefore, $C'_4$ has minimum Euclidean norm at least $24$, and
hence equal to $24$. This proves (i)$\Rightarrow$(ii).

Replacing $A(C'_4)$ by $\Lambda(C_3)$, $B(C'_4)$ by $B(C_3)$ and
$\wt(x)$ even by $\wt(x)$ odd in the proof
of (i)$\Rightarrow$(ii), and by (\ref{eq:L-Bm}), we have that
$\Lambda(C_3)$ has no vector of norm $2$ or $4$. Since $\Lambda(C_3)$ is even,
it has minimum norm $6$. This proves (i)$\Rightarrow$(iii).
\end{proof}

As mentioned in Remark~\ref{rem:ext}, there are at least two extremal
ternary self-dual $[48,24,15]$ codes, namely,
the quadratic residue code $C_{48q}^{(3)}$
and the Pless symmetry code $C_{48p}^{(3)}$.
The code $C_{48q}^{(3)}$ (resp. $C_{48p}^{(3)}$) corresponds to
the extremal $\Z_4$-code $C_{48q}^{(4)}$ (resp. $C_{48p}^{(4)}$),
and the extremal even unimodular lattice $P_{48q}$ (resp. $P_{48p}$) \cite{HKMV}.
There is another known extremal even unimodular lattice $P_{48n}$ \cite{Nebe}.
But it is not known whether $P_{48n}$ has a corresponding extremal ternary code.

The following is an analogue of \cite[Theorem 5]{LPS}.
\begin{thm}
Every extremal ternary self-dual code
of length $48$ is generated by a Hadamard matrix.
\end{thm}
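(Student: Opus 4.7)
The plan is to realize the $96$ weight-$48$ codewords of $C$ as the rows-and-negatives of a Hadamard matrix of order $48$ and then verify the Hadamard relation via a second-moment identity.

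By Gaborit's weight enumerator~(\ref{eq:cwe}), $C$ contains $\pm\allone$ together with exactly $94$ codewords of composition $y^{24}z^{24}$; call the latter set $T$ and write $S = T\cup\{\pm\allone\}$. Under the identification $2\in\F_3 \leftrightarrow -1\in\Z$ the elements of $S$ become $\pm 1$-vectors in $\mathbb{R}^{48}$ forming $48$ antipodal pairs. Select one representative from each pair as the rows of a $48\times 48$ matrix $H$. To prove $HH^\top=48I$, fix two rows $c,c'$ from distinct antipodal pairs. If either is $\pm\allone$, the other has $24$ entries of each sign and $c\cdot c'=0$ automatically. Otherwise $c,c'\in T$; writing $a=|\{i:c_i=c'_i\}|$, the self-orthogonality $c\cdot c'=2a-48\equiv 0\pmod 3$ gives $3\mid a$, while the balanced-sign structure of $c$ and $c'$ (each with $24$ ones and $24$ negative-ones) forces $a$ even, so $6\mid a$. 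Since $c\pm c'\in C$ have weights $a$ and $48-a$, extremality restricts $a$ to $\{0,18,24,30,48\}$, leaving only $a\in\{18,24,30\}$ for distinct non-antipodal pairs.

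Excluding $a\in\{18,30\}$ is the heart of the proof, carried out through the tight-frame matrix $M:=\sum_{c\in S}c\,c^\top$. Direct expansion gives $\sum_{c,c'\in S}(c\cdot c')^2=442368+144(P_{18}+P_{30})$, where $P_a$ counts ordered pairs in $T\times T$ with agreement $a$ and $c\neq\pm c'$; the same sum equals $\operatorname{tr}(M^2)$, which is $442368$ provided $M=96I$. Off the diagonal, $M=96I$ reduces to $\sum_{c\in T}c_ic_j=-2$ for all $i\neq j$, equivalently (using the automorphism $c\mapsto -c$ on $T$) to the assertion that the $94$ one-supports $\{i:c_i=1\}$ of codewords $c\in T$ form a $2$-$(48,24,23)$ design. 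Granted this identity, $P_{18}+P_{30}=0$, so $H$ is Hadamard, and by Lemma~\ref{lem:sd} the rows of $H$ generate a self-dual $[48,24]$ ternary code which, being contained in $C$ with the same dimension, equals $C$.

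The main obstacle, and the step I expect to require the most care, is establishing the $2$-design property from extremality alone. My plan is to combine Assmus--Mattson, which yields a $5$-design on the multiset of supports of all weight-$24$ codewords of $C$, with the cyclic affine automorphism $c\mapsto c+\allone$ of $C$; this automorphism permutes the three composition classes $x^{24}y^{24}\to y^{24}z^{24}\to x^{24}z^{24}$ and, restricted to $T$, carries $c$ to $c+\allone\in x^{24}z^{24}$ whose support equals the one-support of $c$. Equidistributing the $5$-design parameters across the three classes by this cyclic symmetry should isolate the required $2$-$(48,24,23)$ design on the supports of $x^{24}z^{24}$-codewords, hence on the one-supports of $T$.
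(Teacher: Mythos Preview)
Your reduction to the tight-frame identity $M=\sum_{c\in S}cc^\top=96I$ is correct and elegant, and the computation $\operatorname{tr}(M^2)=442368+144(P_{18}+P_{30})$ is right. The paper itself does not argue this way at all: it simply observes that the complete weight enumerator~(\ref{eq:cwe}) makes $C$ \emph{admissible} in Koch's sense and then invokes \cite[Proposition~2]{Koch}, which already contains the statement that the $96$ weight-$48$ words are the rows and their negatives of a Hadamard matrix. So your route is genuinely different; the question is whether your final step closes.

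It does not, as written. Assmus--Mattson gives a $5$-design on the supports of \emph{all} weight-$24$ codewords, i.e.\ on all composition classes $(24,a,24-a)$ with $a\in\{0,3,\dots,24\}$ taken together. You need the $2$-design only on the $94$ supports coming from the single class $(24,0,24)$ (equivalently $(24,24,0)$). Your proposed tool for isolating that class is the shift $c\mapsto c+\allone$, but this map sends the class $(24,a,24-a)$ to $(24-a,24,a)$, which has weight $24+a$; it does not act on the set of weight-$24$ codewords and hence cannot redistribute the Assmus--Mattson design among the composition classes of weight $24$. The only classes your three-cycle touches inside weight $24$ are $(24,24,0)$ and $(24,0,24)$, and those already share the same $94$ supports by negation---so the symmetry gives you nothing new. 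The remaining weight-$24$ classes $(24,3,21),\dots,(24,21,3)$ are untouched, and there is no mechanism in your sketch to peel them away from the Assmus--Mattson block system.

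To repair this you would need a genuinely stronger design statement---for instance a harmonic-weight-enumerator argument \`a la Bachoc showing that each fixed composition class of an extremal ternary code yields a design, or the explicit complete weight enumerator from \cite[Table~1]{Koch} to rule out compositions $(30,9,9)$ and $(18,15,15)$ (which would kill $a\in\{18,30\}$ directly). Either of these is substantially more than what your outline invokes, and is essentially the content of Koch's Proposition~2 that the paper cites.
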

\begin{proof}
Without loss of generality, we may assume $\allone\in C$.
Then (\ref{eq:cwe}) implies that $C$ is admissible in the sense of \cite{Koch}.
As remarked at the end of the paper \cite{Koch}, it follows from
\cite[Proposition 2]{Koch} that the $96$ codewords of weight $48$ in $C$
constitute the rows and their negatives of a Hadamard matrix.
The result then follows from Lemma~\ref{lem:sd}.
\end{proof}

\noindent
{\bf Acknowledgements}\\
We would like to thank Masaaki Kitazume for bringing this
problem to the authors' attention.
We would also like to
thank Masaaki Harada for helpful discussions.
Finally, we would like to thank John McKay for pointing
out a connection to his construction of the Leech lattice
from a Hadamard matrix of order $12$ (\cite{Mc}).

\end{document}